\documentclass[12pt,a4paper]{article}
\usepackage{pifont}
\usepackage{amsfonts}
\usepackage{mathrsfs}
\usepackage{amssymb}
\usepackage{amsmath}
\setcounter{MaxMatrixCols}{10}
\setlength{\oddsidemargin}{-0.2cm}
\setlength{\evensidemargin}{-0.2cm} \setlength{\topmargin}{-1.7cm}
\setlength{\parindent}{12pt} \setlength{\parskip}{3pt plus1pt
minus2pt} \setlength{\baselineskip}{20pt plus2pt minus1pt}
\setlength{\textheight}{24true cm} \setlength{\textwidth}{16true cm}
\newtheorem{theorem}{Theorem}[section]
\newtheorem{definition}[theorem]{Definition}
\newtheorem{lemma}[theorem]{Lemma}
\newtheorem{corollary}[theorem]{Corollary}

\newenvironment{proof}{{\bf Proof.  }}{$\square$}
\makeatletter

\newcommand{\Rmnum}[1]{\expandafter\@slowromancap\romannumeral #1@}
\makeatother
\begin{document}
\title{New approaches to
plactic monoid via Gr\"{o}bner-Shirshov bases\footnote{Supported by the NNSF of China (11171118), the
Research Fund for the Doctoral Program of Higher Education of China
(20114407110007), the NSF of Guangdong Province (S2011010003374) and
the Program on International Cooperation and Innovation, Department
of Education, Guangdong Province (2012gjhz0007).}}
\author{
L.A. Bokut\footnote{Supported by Russian Science Foundation (project
14-21-00065).
} \\
{\small \ School of Mathematical Sciences, South China Normal
University, Guangzhou 510631, P. R. China}\\
{\small Sobolev Institute of Mathematics, Novosibirsk State University, Novosibirsk 630090, Russia}\\
{\small  bokut@math.nsc.ru}\\
\\
 Yuqun
Chen\footnote {Corresponding author.}, \  Weiping Chen  and Jing Li\\
{\small  School of Mathematical Sciences, South China Normal
University, Guangzhou 510631, P. R. China}\\
{\small yqchen@scnu.edu.cn,\ \ 824711224@qq.com,\ \
yulin$_{-}$jj@163.com}}
\date{{\bf In a memory of M.-P. Sch\"utzenberger}}
\maketitle \noindent\textbf{Abstract:}
We give two explicit (quadratic) presentations of the plactic monoid
in row and column generators correspondingly. Then we give direct
independent proofs that these presentations are Gr\"obner-Shirshov
bases of the plactic algebra in deg-lex orderings of generators.
From Composition-Diamond lemma for associative algebras it follows
that the set of Young tableaux is the Knuth normal form for plactic
monoid (\cite{Knuth}, see also Ch. 5 in \cite{M.L}).

\noindent \textbf{Key words:} Gr\"{o}bner-Shirshov basis, normal
form, associative algebra, plactic monoid,  Young tableau.

\noindent \textbf{AMS 2010 Subject Classification}: 16S15, 13P10,
20M05

\section{Introduction}\label{Intro}\noindent

Plactic monoid is supposed to be one of the most important monoids
in algebra.\footnote{Sch\"utzenberger, Marcel-Paul, A vote for the
plactic monoid. (Pour le mono\" ide plaxique.) (French) [J] Math.
Inf. Sci. Hum. 140, 5-10 (1997). From the paper: This text is a
brief answer to a question raised long ago by A. Lentin and more
recently by G.-C. Rota: Why the plactic monoid ought to be
considered as one of the fundamental monoids of algebra?} It was
introduced by D. Knuth \cite{Knuth} under the name ``tableau" monoid
(``tableau algebra") and based on Robinson 1938 and Schensted 1961
algorithm. Knuth proved that Young tableaux are normal forms, called
(Robinson-Schensted-) Knuth normal forms, of elements of plactic
monoids. The name ``plactic" was given by Sch\"utzenberger and the
basic theory of plactic monoids were given in \cite{M.L.M.-P}.
Plactic monoids are closely connected to the representations of
linear groups (Littlewood-Richardson rule), the symmetric functions
(Shur functions), the quantum groups (Kashiwara crystal bases), the
statistics (the charge statistics), the root systems   and some
others mathematical subjects.

Gr\"{o}bner bases and Gr\"{o}bner-Shirshov bases  were invented
independently by A.I. Shirshov for ideals of free (commutative,
anti-commutative) non-associative algebras \cite{Sh62a,Shir3}, free
Lie algebras \cite{Sh62b,Shir3} and implicitly free associative
algebras \cite{Sh62b,Shir3}  (see also \cite{Be78,Bo76}), by H.
Hironaka \cite{Hi64} for ideals of the power series algebras (both
formal and convergent), and by B. Buchberger \cite{Bu70} for ideals
of the polynomial algebras.

Gr\"{o}bner bases and Gr\"{o}bner-Shirshov bases theories have been
proved to be very useful in different branches of mathematics,
including commutative algebra and combinatorial algebra, see, for
example, the books \cite{AL, BKu94, BuCL, BuW, CLO, Ei}, the papers
\cite{Be78,Bo72,Bo76,BCC08,BCC11,BCD08,BCLi13,BCL08,BCM,BCM13,
Bu70,CC12,KL,MZ}, and the surveys \cite{BC,BC13,BCS, BFKK00, BK03, BK05}.

Let $A=\{1,2,\dots,n\}$ with $1<2<\dots<n$. Then we call
$$
Pl(A):=sgp\langle A|\Omega\rangle=A^*/\equiv
$$
a plactic monoid on the alphabet set $A$, see \cite{M.L}, where
$A^*$ is the free monoid generated by $A$, $\equiv$ is the
congruence of $A^*$ generated by the Knuth relations $\Omega$ and
$\Omega$ consists of
$$
ikj= kij \ (i\leq j<k),\ \ \ jki=jik \ (i< j\leq k).
$$
Let $F$ be a field. Then $F\langle A|\Omega\rangle$ is called the
plactic monoid algebra over $F$ of  $Pl(A)$.
A nondecreasing word $R\in A^*$ is called a row and a strictly
decreasing word $C\in A^*$ is called a column, for example,
$1135556$ is a row and 6531 is a column.

In the paper Okninski et al \cite{Okninskii}  it is proved that a
Gr\"{o}bner-Shirshov basis of plactic monoid in initial alphabet is
infinite providing the number of letters at least 4, but an explicit
description of such a Gr\"{o}bner-Shirshov basis is unknown.

In the paper of A.J. Cain, R. Gray and A. Malheiro \cite{Portugal}, authors use the Schensted-Knuth
normal form (the set of (semistandard) Young tableaux) to prove that the multiplication
table of column words, $uv=u'v'$, forms a finite Gr\"{o}bner-Shirshov basis of a finitely generated plactic monoid.
Here the Young tableaux $u'v'$ is the result of the column Schensted algorithm
applying to $uv$, but $u'v'$ is not explicitly written.

In this paper, we give new explicit formulas for the multiplication tables of row and column words
correspondingly. Also we give independent proofs that the result sets of relations are Gr\"{o}bner-Shirshov
bases in row and column generators respectively. As the result it gives two new approaches to plactic monoids
via Gr\"{o}bner-Shirshov bases of them.

We thank Mr. Xuehui Chen for his valuable discussions of the paper.

\section{Preliminaries}\label{pre1}\noindent

We first cite some concepts and results from the literatures
\cite{Bo72, Bo76, Sh62b,Shir3} which are related to
Gr\"{o}bner-Shirshov bases for associative algebras.

Let $X$ be a set and $F$ a field.  Throughout this paper, we denote
$F\langle X\rangle$ the free associative algebra over $F$ generated
by $X$ and  $X^*$ the free monoid generated by $X$.

A well ordering $<$ on $X^*$ is called monomial if for $u, v\in X^*$,
we have
$$
u < v \Rightarrow w|_u < w|_v\  \ for \  all \
 w\in  X^*,
$$
where $w|_u=w|_{x\mapsto u}, \ w|_v=w|_{x\mapsto v}$ and $x$'s are
the same individuality of the letter $x\in X$ in $w$.

A standard example of monomial ordering on $X^*$ is the deg-lex ordering
which first compares two words by degree (length) and then by
comparing them lexicographically, where $X$ is a well-ordered set.

Let $X^*$ be a set with a monomial ordering $<$. Then, for any non-zero
polynomial $f\in F\langle X\rangle$, $f$ has the leading word
$\overline{f}$. We call $f$  monic if the coefficient of
$\overline{f}$ is 1. By $|\overline{f}|$ we denote the degree of
$\overline{f}$.

Let $f,\ g\in F\langle X\rangle$ be two monic polynomials and $w\in
X^*$. If  $w=\overline{f}b=a\overline{g}$  for some $a,b\in X^*$
such that $|\overline{f}|+|\overline{g}|>|w|$, then $(f,g)_w=fb-ag$
is called the intersection composition of $f,g$ relative to $w$. If
$w=\overline{f}=a\overline{g}b$  for some $a, b\in X^*$, then
$(f,g)_w=f-agb$ is called the inclusion composition of $f,g$
relative to $w$.

Let $S\subset F\langle X\rangle$ be a monic set. A composition
$(f,g)_w$ is called trivial modulo $(S,w)$, denoted by
$$
(f,g)_w\equiv0 \ \ \ mod(S,w)
$$
if $(f,g)_w=\sum\alpha_ia_is_ib_i,$ where every $\alpha_i\in F, \
s_i\in S,\ a_i,b_i\in X^*$, and $a_i\overline{s_i}b_i<w$.

Recall that $S$ is a Gr\"{o}bner-Shirshov basis in $F\langle
X\rangle$ if any composition of polynomials from $S$ is trivial
modulo $S$ and corresponding $w$.

The following lemma was first proved by Shirshov \cite{Sh62b,Shir3}
for free Lie algebras (with deg-lex ordering) (see also Bokut
\cite{Bo72}). Bokut \cite{Bo76} specialized the approach of Shirshov
to associative algebras (see also Bergman \cite{Be78}). For
commutative polynomials, this lemma is known as Buchberger's Theorem
(see \cite{Bu70}).

\begin{lemma}\label{l1}
(Composition-Diamond lemma for associative algebras) \ Let $F$ be a
field,  $<$ a monomial ordering on $X^*$ and $Id(S)$ the ideal of $F
\langle X\rangle$ generated by $S$. Then the following statements
are equivalent.

\begin{enumerate}
\item[(1)] $S$ is a Gr\"{o}bner-Shirshov basis in $F\langle
X\rangle$.
\item[(2)] $f\in Id(S)\Rightarrow \bar{f}=a\bar{s}b$
for some $s\in S$ and $a,b\in  X^*$.
\item[(3)] $Irr(S) = \{ u \in X^* |  u \neq a\bar{s}b,s\in S,a ,b \in X^*\}$
is a linear basis of the algebra $F\langle X | S \rangle:=F\langle
X\rangle/Id(S)$.
\end{enumerate}
\end{lemma}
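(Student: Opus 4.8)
The plan is to prove the cycle of implications $1)\Rightarrow 2)\Rightarrow 3)\Rightarrow 1)$, with the first implication carrying essentially all of the content. Throughout, the engine is transfinite induction on the monomial well ordering $<$ on $X^*$, which is what legitimizes ``pick a minimal counterexample'' type arguments.

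For $1)\Rightarrow 2)$, let $f\in Id(S)$ be nonzero and write it as a finite sum $f=\sum_i\alpha_i a_i s_i b_i$ with $\alpha_i\in F\setminus\{0\}$, $s_i\in S$, $a_i,b_i\in X^*$; set $w_i=a_i\overline{s_i}b_i$ and $w=\max_i w_i$. Among all such representations of $f$ I would fix one for which the pair $\bigl(w,\,\#\{i:w_i=w\}\bigr)$ is minimal in the lexicographic order (first on $w$, then on the cardinality, both with the usual order). If $\overline f=w$ we are done, since then $\overline f=a_1\overline{s_1}b_1$ is a factorization as required by $2)$. Otherwise $\overline f<w$, which forces at least two indices, say $1$ and $2$, with $w_1=w_2=w$, because a lone top term cannot cancel. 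I then distinguish three cases according to how the distinguished occurrences of $\overline{s_1}$ and $\overline{s_2}$ sit inside $w$: disjoint, overlapping, or one nested in the other. In the disjoint case one rewrites $\alpha_1 a_1 s_1 b_1+\alpha_2 a_2 s_2 b_2$ directly and the ``diamond'' closes; in the other two cases this expression equals, up to a nonzero scalar and left/right multiplication by fixed words $a,b\in X^*$ with $aw'b=w$, the corresponding intersection (resp.\ inclusion) composition $(s_1,s_2)_{w'}$, whose triviality modulo $(S,w')$ is precisely hypothesis $1)$. Substituting a trivial expansion of that composition yields a new representation of $f$ in which every word $a\overline s b$ attached to a newly created term is $<w$, so the measure $\bigl(w,\,\#\{i:w_i=w\}\bigr)$ strictly decreases, contradicting minimality. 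Hence $\overline f=w$, which is statement $2)$. I expect this case analysis to be the only real obstacle: one must verify carefully that $aw'b=w$ holds in the overlapping and nested situations, and hence that all newly introduced words genuinely lie below $w$.

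For $2)\Rightarrow 3)$, first note --- using only that $S$ is monic --- that $Irr(S)$ spans $A$: given $f\in F\langle X\rangle$, if $\overline f\in Irr(S)$ peel off its leading term, and if $\overline f=a\overline s b$ replace $f$ by $f-\alpha\,asb$ with $\alpha$ the leading coefficient; either operation strictly lowers the leading word, so induction on $<$ expresses $f$ modulo $Id(S)$ as an $F$-linear combination of $Irr(S)$. For linear independence, suppose $h=\sum_{u\in Irr(S)}\alpha_u u\in Id(S)$ with some $\alpha_u\neq 0$; then $h\neq 0$, so by $2)$ its leading word equals $a\overline s b$ and hence lies outside $Irr(S)$, which is absurd since $\overline h$ is one of the $u$'s. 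Thus $Irr(S)$ is an $F$-basis of $A$.

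For $3)\Rightarrow 1)$, the crucial remark is that when $Irr(S)$ is a basis, every nonzero $h\in Id(S)$ has $\overline h\notin Irr(S)$: the spanning procedure above, applied to $h$, never alters the top term $\alpha_{\overline h}\overline h$ once $\overline h\in Irr(S)$ (all later steps involve strictly smaller words), so it would write $h$ as a nonzero combination of $Irr(S)$; but that combination also lies in $Id(S)$, hence is $0$ in $A$, contradicting independence. Now let $(f,g)_w$ be any composition with $f,g\in S$. It lies in $Id(S)$ and, when nonzero, satisfies $\overline{(f,g)_w}<w$ (the leading terms cancel by construction); by the remark $\overline{(f,g)_w}=a\overline s b$ for some $s\in S$, so subtracting the matching scalar multiple of $asb$ stays inside $Id(S)$, lowers the leading word, and introduces only words $<w$. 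Iterating and invoking well-foundedness of $<$ shows $(f,g)_w\equiv 0\;\mathrm{mod}\,(S,w)$; hence $S$ is a Gr\"obner--Shirshov basis, completing the cycle.
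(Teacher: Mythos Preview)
The paper does not prove this lemma at all: it is quoted as background, with references to Shirshov, Bokut, Bergman, and Buchberger, and no argument is given in the paper. So there is no ``paper's own proof'' to compare against.

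That said, your argument is essentially the classical one found in those references and is correct. The cycle $1)\Rightarrow 2)\Rightarrow 3)\Rightarrow 1)$ is the standard organization; the minimal-representation device with the measure $\bigl(w,\#\{i:w_i=w\}\bigr)$ is exactly how $1)\Rightarrow 2)$ is usually handled, and your treatment of $2)\Rightarrow 3)$ and $3)\Rightarrow 1)$ is fine. Two small points worth tightening if you write this up in full: in the disjoint case you should spell out the identity
\[
a_1 s_1 b_1 - a_2 s_2 b_2 \;=\; -\,a_1 s_1 c\,(s_2-\overline{s_2})\,b_2 \;+\; a_1\,(s_1-\overline{s_1})\,c\,s_2 b_2
\]
(with $a_2=a_1\overline{s_1}c$, $b_1=c\,\overline{s_2}b_2$), since that is what shows the difference is a sum of terms $a s b$ with $a\overline{s}b<w$; and in $3)\Rightarrow 1)$ your claim that the leading coefficient of $\overline h$ survives the reduction unchanged is correct precisely because every later step involves strictly smaller words, which you do say. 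Otherwise the argument is complete.
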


If a subset $S$ of $F\langle X \rangle$ is not a
Gr\"{o}bner-Shirshov basis then one can add all nontrivial
compositions of polynomials of $S$ to $S$. Continuing this process
repeatedly, we finally obtain a Gr\"{o}bner-Shirshov basis
$S^{comp}$ that contains $S$. Such a process is called Shirshov
algorithm.

Let ${\cal A}=sgp\langle X|S\rangle$ be a semigroup presentation.
Then $S$ is also a subset of $F\langle X \rangle$ and we can find
Gr\"{o}bner-Shirshov basis
 $S^{comp}$. We also call $S^{comp}$ a
Gr\"{o}bner-Shirshov basis of ${\cal A}$. $Irr(S^{comp})=\{u\in
X^*|u\neq a\overline{f}b,\ a ,b \in X^*,\ f\in S^{comp}\}$ is an
$F$-linear basis of $F\langle X|S\rangle$ which is also a set of
normal forms of elements of the semigroup ${\cal A}$.
\section{Plactic monoid with row generators}\noindent
Let $A=\{1,2,\dots,n\}$. Let   $R\in A^*$ be a row. Then we denote
$R=(r_1,r_2,\dots, r_n)$, where $r_i$ is the number of letter $i\
(i=1,2,\dots,n)$, for example, $R=1135556=(2,0,1,0,3,1,0,\dots,0)$.

Let $R=i_1i_2\dots i_t,S=j_1j_2\dots j_l\in A^*$ be two rows. Then
$R$ dominates $ S$ if $t=|R|\leq l=|S|$ and $i_q>j_q,\ q=1,\dots t$,
where $|R|$ is the length of the word $R$.

A (semistandard) Young tableau on $A$ (see \cite{M.L}) is a word
$w=R_1R_2 \cdots R_t$ such that
 $R_{i}$ dominates $R_{i+1},\ i=1,\dots,t-1$, where each $R_{i}$ is a row. For example,
$$
4556\cdot223357\cdot1112444
$$
is a Young tableau.

Let $U=\{R\in A^*\ |\ R \mbox{ is a row} \}$.

We order the set $U^*$ as follows.

Let $R=(r_1,r_2,\dots, r_n)\in U$. Then $|R|=r_1+\dots+r_n$.

We first order $U$: for any $R, S\in U$, $ R<S $ if and  only if
$|R|<|S|$ or $|R|=|S|$ and there exists a $t \ (0\leq t< n)$ such
that $r_i=s_i, \ i=1,\dots, t$ and $r_{t+1}>s_{t+1}.$ Clearly, this
is a well ordering on $U$. Then we order $U^*$ by the deg-lex ordering. We
will use this ordering throughout this section.

\begin{definition}\label{def3}
(Robinson-Schensted row algorithm \cite{Schensted})  Let $R\in U$, $x\in$
A.
$$
R\cdot x= \left\{
\begin{array}{cc}
    Rx,& if~Rx~is~a~row, \\
    y\cdot R',& otherwise
\end{array}
\right.
$$
where $y$ is the leftmost letter in $R$ and is strictly larger than
$x$, and $R'=R|_{y\mapsto x}$, i.e. $R'$ is obtained from $R$ by
replacing $y$ by $x$.
\end{definition}

For any $R,S\in U$, by induction,  it is clear that there
exist  $R',S'\in U$ such that $R\cdot S=R'\cdot S'$ and $R'\cdot S'$
is a Young tableau, where $R'$ is empty (i.e. $R'=(0,\dots,0)$) if
$R\cdot S=S'$ is a row.

By noting that in $sgp\langle A\mid \Omega\rangle,\ R\cdot S=R'\cdot
S'$,  it follows that $ sgp\langle U\mid \Gamma\rangle\cong
sgp\langle A\mid \Omega\rangle $ and so we may assume that $F\langle
U\mid \Gamma\rangle=F\langle A\mid \Omega\rangle$, where
$$
\Gamma=\{R\cdot S=R'\cdot S',\ R,S\in U\}.
$$

The following is the first main result of the paper.

\begin{theorem}\label{mainTH}
Let the notation be as before. Then with the deg-lex ordering on $U^*$,
$\Gamma$ is a Gr\"{o}bner-Shirshov basis for the plactic algebra
$F\langle U\mid \Gamma\rangle$.
\end{theorem}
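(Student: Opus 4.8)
The plan is to verify directly that every composition of elements of $\Lambda$ is trivial modulo $\Lambda$, and then invoke the Composition--Diamond lemma (Lemma~\ref{l1}). The defining relations are indexed by pairs of rows: for $R,S\in U$ the relation $f_{R,S}$ has leading word $\overline{f_{R,S}}=RS$ (this needs a small check: since $R\cdot S = R'S'$ is a Young tableau with $R'$ dominating $S'$, and the ordering on $U$ is built so that a longer first row is smaller, one should confirm $RS > R'S'$ in the deg-lex ordering on $U^*$; the length $|RS|=|R'S'|$ is preserved, and $R$ has no more letters than $R'$ while being "less dominant", so $R>R'$ in $U$, hence $RS>R'S'$). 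So the leading monomials are exactly the length-two words $RS\in U^*$ that are \emph{not} Young tableaux, i.e. where $R$ does not dominate $S$.

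**The composition analysis.**

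Because every leading word has length $2$, there are no inclusion compositions, and the only intersection compositions come from overlaps of length $1$: that is, from words $w = RST$ with $R,S,T\in U$ such that $RS$ and $ST$ are both leading words (neither $RS$ nor $ST$ is a Young tableau). For such a triple I would form
$$
(f_{R,S},f_{S,T})_w \;=\; f_{R,S}\,T \;-\; R\,f_{S,T},
$$
which up to lower terms equals $R'S'T - RS''T'$ where $R\cdot S = R'S'$ and $S\cdot T = S''T'$ are the tableau outputs. The goal is to rewrite $RST$ in two ways by the Schensted insertion and show both produce the \emph{same} Young tableau, with all intermediate monomials strictly below $w$. Concretely: the associativity of Schensted's algorithm, $(R\cdot S)\cdot T = R\cdot(S\cdot T)$ as elements of $U^*$ reduced to a Young tableau, is the engine. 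Since every rewriting step $PQ \to P'Q'$ (tableau-izing a length-two segment) strictly decreases the word in the deg-lex ordering — and since the ordering is a well ordering, so the rewriting terminates — repeated application to $R'S'T$ and to $RS''T'$ must land in the common confluent value, which is the unique Young tableau Schensted-equivalent to $RST$. This shows $(f_{R,S},f_{S,T})_w\equiv 0 \pmod{(\Lambda,w)}$.

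**The main obstacle.**

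The delicate point is not the high-level strategy but making the confluence of the two rewriting sequences genuinely rigorous as a statement about \emph{words in $U^*$} (not just about congruence classes): one must check that tableau-izing a length-two factor inside a longer word always lowers the word and that the process on three rows is confluent. The cleanest route is a lemma establishing the associativity of Schensted insertion at the level of the normal form in $U^*$ — i.e., that for any $R,S,T\in U$ the two procedures "insert $S$ into $R$, then insert $T$" and "insert $T$ into $S$, then insert the result into $R$" yield literally the same Young tableau — together with the observation that each elementary step $PQ\mapsto P'Q'$ satisfies $P'Q'<PQ$. Granting these, one checks the two expansions of $(f_{R,S},f_{S,T})_w$ reduce to a common word via steps all bounded above by $w$, so the composition is trivial. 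I would also separately dispose of the degenerate overlaps (e.g. when $R'$ is empty because $R\cdot S$ is already a single row), which only make the argument easier. Once all compositions are shown trivial, Lemma~\ref{l1} gives that $\Lambda$ is a Gr\"obner--Shirshov basis, and statement~3) of that lemma then identifies $Irr(\Lambda)$ — which is exactly the set of Young tableaux, since the leading words of $\Lambda$ are precisely the non-tableau length-two factors — as an $F$-basis, completing the proof.
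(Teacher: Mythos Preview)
Your high-level framework is correct and matches the paper's: the only compositions are intersection compositions on ambiguities $w=RST$, and to show each is trivial one must verify that the two reduction orders
\[
RST \;\longrightarrow\; (R'S')T \;\longrightarrow\;\cdots
\qquad\text{and}\qquad
RST \;\longrightarrow\; R(S''T') \;\longrightarrow\;\cdots
\]
arrive at the \emph{same} word in $U^*$, with every intermediate monomial below $w$. Your check that $\overline{f_{R,S}}=RS$ is fine, and the observation that each rewriting step strictly decreases the word is also correct.

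The gap is that you have located, but not closed, the entire content of the theorem. The ``lemma establishing the associativity of Schensted insertion at the level of the normal form in $U^*$'' is not a preliminary step one can grant and move past: it \emph{is} the theorem. Indeed, once that confluence is known, the Gr\"obner--Shirshov property follows in one line, and conversely (via the Composition--Diamond lemma) the Gr\"obner--Shirshov property yields confluence and the normal-form theorem for Young tableaux. Since the explicit aim of the paper is to give a \emph{new} proof of Knuth's normal-form theorem, you cannot cite Knuth (or any equivalent associativity statement for Schensted insertion) to justify this lemma without circularity.

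The paper confronts this head-on: it rewrites Schensted's bumping rule as the closed-form recursion of Definition~4.2, fixes notation for the six intermediate rows arising from the two reduction orders on $RST$, and then proves by a rather intricate induction on the coordinate index (Lemmas~4.1--4.6 and the case analyses in \S\S4.3--4.4) that the two resulting triples of rows coincide entry by entry. That computation is the substance of the proof; your proposal would need to supply either it or an independent argument of comparable strength.
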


By using Composition-Diamond lemma for associative algebras (Lemma
\ref{l1}) and Theorem \ref{mainTH}, we have the following corollary.

\begin{corollary}\label{co1}(\cite{M.L}, Chapter 5)
The set of Young tableaux on $A$ is a set of normal forms of
elements of the plactic monoid $sgp\langle A\mid \Omega\rangle$.
\end{corollary}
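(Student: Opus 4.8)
The plan is to apply the Composition-Diamond lemma (Lemma \ref{l1}) to $\Lambda$: I must determine the leading words of the generators of $\Lambda$, read off $Irr(\Lambda)$, and then verify that every composition of elements of $\Lambda$ is trivial. Write $f_{R,S}=R\cdot S-(R\cdot S)^{tab}$ for the generator attached to a pair $R,S\in U$, where $(R\cdot S)^{tab}=R'\cdot S'$ is the Young tableau produced by Schensted's algorithm (Definition \ref{def3}). First I would show that $f_{R,S}=0$ exactly when $R$ dominates $S$ (in that case $R\cdot S$ is already a tableau, which the algorithm returns unchanged), and that otherwise $f_{R,S}$ is monic with $\overline{f_{R,S}}=R\cdot S$. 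The point is that Schensted insertion preserves the total length over $A$, so $|R'|+|S'|=|R|+|S|$, while the $U^*$-degree of $(R\cdot S)^{tab}$ is at most $2$: if it equals $1$ then $R\cdot S$ wins the deg-lex comparison by degree; if it equals $2$ one checks $R'<R$ in the ordering on $U$. For the last point I would use that the entries of the top row $R'$ are precisely those bumped out of the evolving bottom row during the insertion, and that these occupy pairwise distinct original positions of $R$, so $|R'|\le|R|$, with equality forcing $R'=R$ and hence $R$ dominating $S$; thus $|R'|<|R|$ whenever $f_{R,S}\neq0$, and so $R'<R$ by the length-first clause of the ordering on $U$. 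This is the one place where the nonstandard ordering on $U$ needs care.

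With the leading words identified, $Irr(\Lambda)$ consists of the words $R_1\cdots R_t\in U^*$ having no two-letter factor $R_iR_{i+1}$ with $R_i$ not dominating $R_{i+1}$; that is, $Irr(\Lambda)$ is exactly the set of Young tableaux on $A$. For the compositions, note that every leading word $R\cdot S$ has $U^*$-degree $2$: hence there are no nontrivial inclusion compositions (an inclusion of one length-$2$ word in another forces the two to coincide), and an intersection composition can only come from an overlap of $U^*$-length $3$. So the compositions to check are, for rows $R,S,T$ with $R$ not dominating $S$ and $S$ not dominating $T$, the intersection compositions relative to $w=R\cdot S\cdot T$, and a short computation gives
\[
(f_{R,S},f_{S,T})_w=f_{R,S}\cdot T-R\cdot f_{S,T}=R\cdot (S\cdot T)^{tab}-(R\cdot S)^{tab}\cdot T .
\]

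To prove this composition trivial modulo $(\Lambda,w)$ I would rewrite each of the two terms down to a common Young tableau. Since $(S\cdot T)^{tab}<S\cdot T$ and $(R\cdot S)^{tab}<R\cdot S$, both $R\cdot(S\cdot T)^{tab}$ and $(R\cdot S)^{tab}\cdot T$ are already strictly below $w$; applying $\Lambda$-relations (each step replacing a two-letter factor $P\cdot Q$ with $P$ not dominating $Q$ by the tableau $(P\cdot Q)^{tab}$) strictly decreases the word, so every monomial occurring remains below $w$, and each rewriting terminates at an element of $Irr(\Lambda)$, that is, at a Young tableau. The essential claim is that both rewritings terminate at the same tableau $Y$, namely the insertion tableau of the word $RST$. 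To see this, observe that a single $\Lambda$-reduction does not change the insertion tableau of the underlying word over $A$: it replaces a factor $\underline{P}\,\underline{Q}$ by the reading word of $(P\cdot Q)^{tab}$, hence by a Knuth-equivalent factor, and it is elementary that Knuth-equivalent words have the same insertion tableau, while a word that is itself a Young tableau equals its own insertion tableau. Hence any maximal $\Lambda$-reduction of $R\cdot(S\cdot T)^{tab}$ ends at the insertion tableau of its underlying word, and since $(S\cdot T)^{tab}$ is Knuth-equivalent to $S\cdot T$ this is the insertion tableau of $RST$; the same holds for $(R\cdot S)^{tab}\cdot T$. Writing the two rewritings as $R\cdot(S\cdot T)^{tab}=Y+\sum_i\alpha_ia_is_ib_i$ and $(R\cdot S)^{tab}\cdot T=Y+\sum_j\beta_jc_jt_jd_j$ with $s_i,t_j\in\Lambda$ and $a_i\overline{s_i}b_i,\,c_j\overline{t_j}d_j<w$, the composition equals $\sum_i\alpha_ia_is_ib_i-\sum_j\beta_jc_jt_jd_j\equiv0\ mod\ (\Lambda,w)$. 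This shows $\Lambda$ is a Gr\"{o}bner-Shirshov basis, which is Theorem \ref{mainTH}; Corollary \ref{co1} then follows immediately from part 3) of Lemma \ref{l1}, because $Irr(\Lambda)$ is the set of Young tableaux on $A$.

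The main obstacle is the last step: showing that the two orders of inserting $R$, $S$, $T$ lead to the same tableau — in effect the associativity of Schensted insertion on rows — while keeping every intermediate monomial strictly below $w$. If one prefers not to invoke associativity of row insertion as a known fact, this becomes a case analysis on how $S$ splits when inserted into $R$ and how the resulting pieces then interact with $T$, using the elementary row-bumping properties behind Definition \ref{def3}; I expect this, together with the leading-term analysis of Step 1 for the nonstandard ordering on $U$, to absorb essentially all of the technical work.
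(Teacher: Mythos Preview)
Your argument is logically valid but circular relative to the paper's goal. The crucial step where you show the intersection composition $(f_{R,S},f_{S,T})_{RST}$ is trivial invokes the fact that ``Knuth-equivalent words have the same insertion tableau.'' That statement is precisely Knuth's theorem, and it is \emph{equivalent} to Corollary~\ref{co1}: once you know that Knuth moves preserve the insertion tableau, the normal-form statement follows immediately (each word $w$ is Knuth-equivalent to $P(w)$, and any two equivalent words share the same $P$), without ever mentioning Gr\"{o}bner--Shirshov bases. So your route uses the corollary to prove Theorem~\ref{mainTH} and then re-derives the corollary from it; the GS machinery contributes nothing. You acknowledge this in your last paragraph, but the ``case analysis'' you defer there is exactly the content that is missing.

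The paper's proof avoids this circularity by never appealing to properties of Knuth equivalence. Instead it rewrites Schensted's algorithm as closed-form coordinate formulas (Definition~4.1: $X_p=\min(Z_{p-1},X_{p-1}+w_p)$, etc.), and then verifies \emph{directly}, by induction on the coordinate index, that the two reduction orders of $R\cdot S\cdot T$ produce identical rows $C=K$ and $E=L$. This is done through a sequence of technical lemmas (Lemmas~4.4--4.8) and a multi-case induction; it is the entire substance of Section~4. Your leading-term analysis and identification of $Irr(\Lambda)$ with Young tableaux are fine and match the paper's setup, but to complete an independent proof you must replace the appeal to Knuth's theorem with a self-contained verification that the two reductions of $RST$ coincide---either via the paper's coordinate computation or some other argument that does not presuppose the Knuth--Schensted correspondence.
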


{\bf Remark}: For an arbitrary well-ordered set $A$, we
define similarly  a row on $A^*$, Robinson-Schensted row algorithm, Young
tableau on $A$ and the set  $\Gamma$. Then for an arbitrary
well-ordered set $A$, similar to the proof of $A$ to be finite, we
also have Theorem \ref{mainTH} and Corollary \ref{co1}.

\subsection{Main formula for the product of row generators}

Let $(w_1,w_2,\dots,w_n)\in U$. Denote $W_p=\sum_{i=1}^{p}w_i\ (1
\leq p \leq n)$, where $w\ (z,x,y,\dots)$ represents any lowercase
symbol and $W\ (Z,X,Y,\dots)$ the corresponding uppercase symbol.

\begin{definition}\label{def2}
Let $W=(w_1,w_2,\dots,w_n),Z=(z_1,z_2,\dots,z_n)\in U$.
Define an algorithm
$
W\cdot Z=X\cdot Y,
$
where $X=(x_1,x_2,\dots,x_n),Y=(y_1,y_2,\dots,y_n),\ x_1=0$,
\begin{eqnarray*}
x_p&=&min(Z_{p-1}-X_{p-1},w_p),\ \ n\geq p\ge 2 \\
y_q&=&w_q+z_q-x_q, \ \ n\geq q\ge 1.
\end{eqnarray*}
Clearly, either $X,Y\in U$ or $Y\in U$ and $X= (0,0,\dots,0)$.
\end{definition}

The formulas in Definition \ref{def2} play a key role in the proof
of Theorem \ref{mainTH}.

\begin{lemma}\label{eqv}
The algorithms in Definition \ref{def2} and Definition \ref{def3}
are equivalent.
\end{lemma}
\begin{proof}\ Definition \ref{def2} $\Rightarrow$ Definition \ref{def3}.

Suppose that $W=(w_1,w_2,\dots,w_n)\in U$ and $Z$ is a letter. Then
we can express $Z=(0,\dots,0,z_p,0,\dots,0)$, where $z_p=1.$
Let $w_j\ (1\leq j\leq n)$ satisfy $w_j\neq0$ and
$w_{j+1}=\dots=w_n=0$. There are two cases to consider.

Case 1. $p\geq j$. Then
\begin{eqnarray*}
WZ&=&\left\{ \begin{array}{cccccccccccc}
               (w_1, &\dots &w_{j-1},& w_j+1,&0,& &\dots &   &\dots& &0),    &\ \ \ \ p=j,\\
               (w_1, &\dots &w_{j-1},& w_j,  &1,&0,&\dots &   &\dots& &0),    &\ \ \ \ p=j+1,\\
               (w_1, &\dots &w_{j-1},& w_j,  &0,& &\dots & 0, & 1,   &0,&\dots),&\ \ \ \ p>j+1.
                \end{array} \right.
\end{eqnarray*}
Therefore, if $p\geq j$, $WZ=Y$ and $Y$ is a row. This
corresponds to the first part of Definition \ref{def3}.

Case 2. $p<j$. Then $WZ=XY$, where
\begin{eqnarray*}
\left. \begin{array}{cccccccccccc}
                X=&(0,  &       &       & \dots &         &       &  &  0,     &     1,   & 0,       & \dots)\\
                Y=&(w_1, & \dots &w_{p-1},& w_p+1, & w_{p+1}, & \dots &  & w_{k-1},& w_{k}-1, & w_{k+1}, & \dots)
                \end{array} \right.
\end{eqnarray*}
and $k$ satisfies $p+1\le k\le j$, $w_{p+1}=w_{p+2}=\cdots=w_{k-1}=0$, $w_{k}\ge 1$. This
corresponds to the second part of Definition \ref{def3}.

Definition \ref{def3} $\Rightarrow$ Definition \ref{def2}.

By Definition \ref{def3}, it is clear that  $x_1=0$ and $
y_1=w_1+z_1=w_1+z_1-x_1$. Also, $x_2=min(z_1,w_2)=min(Z_1-X_1,w_2)$
and  $ y_2=w_2+z_2-x_2$.  The result follows by induction.
\end{proof}

\begin{corollary}
In Definition \ref{def2}, $X\cdot Y$ is a Young tableau.
\end{corollary}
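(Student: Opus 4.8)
The plan is to prove that $X$ dominates $Y$; since $X\cdot Y$ consists of at most two rows, this is exactly the assertion that $X\cdot Y$ is a Young tableau (and when $X=(0,\dots,0)$ the word $X\cdot Y=Y$ is a row, so there is nothing to prove).

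First I would record two elementary consequences of the recursions in Definition \ref{def2}. (i) $X_p\le W_p$ for all $1\le p\le n$: this follows by induction on $p$, since $X_1=0\le W_1$ and $X_p=\min(Z_{p-1},X_{p-1}+w_p)\le X_{p-1}+w_p\le W_{p-1}+w_p=W_p$. (ii) $X_p\le Z_{p-1}$ for $2\le p\le n$, which is immediate from the defining minimum. From (i) together with $Y_q=W_q+Z_q-X_q$ we get $Y_q\ge Z_q$ for every $q$; combined with (ii) this yields $X_p\le Z_{p-1}\le Y_{p-1}$ for all $p\ge 2$, while $X_1=0=Y_0$ with the convention $Y_0=0$. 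Hence the partial sums obey
$$
X_p\le Y_{p-1}\qquad(1\le p\le n).
$$

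It remains to convert this into the statement that $X$ dominates $Y$. Viewing a row $R=(r_1,\dots,r_n)$ as a word, the letter in position $k$ is the least index $j$ with $R_j\ge k$. So let $1\le k\le |X|=X_n$ and let $a$ be the $k$-th letter of $X$, i.e. $X_{a-1}<k\le X_a$; then $a\ge 2$ (because $X_1=0<k$), so $Y_{a-1}\ge X_a\ge k$, which forces the $k$-th letter $b$ of $Y$, being the least index $j$ with $Y_j\ge k$, to satisfy $b\le a-1<a$. Thus every letter of $X$ is strictly larger than the corresponding letter of $Y$, and moreover $|X|=X_n\le Z_{n-1}\le Z_n\le Y_n=|Y|$. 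Therefore $X$ dominates $Y$, so $X\cdot Y$ is a Young tableau.

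I do not expect a genuine obstacle here: once the bounds (i) and (ii) are in hand the argument is short bookkeeping. The only points needing care are the boundary index $p=1$ (where $x_1=0$), the degenerate case $X=(0,\dots,0)$, and keeping the dictionary between the vector notation $(r_1,\dots,r_n)$ and the word form of a row consistent when applying the definition of ``dominates''. Alternatively one could deduce the corollary from the equivalence of Definitions \ref{def2} and \ref{def3} just proved, together with the fact that iterated Schensted insertion of a row into a row produces a Young tableau, but the direct computation above is self-contained and shorter.
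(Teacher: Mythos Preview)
Your proof is correct. The bounds (i) $X_p\le W_p$ and (ii) $X_p\le Z_{p-1}$ are exactly right, and the translation from the inequality $X_p\le Y_{p-1}$ to the letter-by-letter dominance condition is clean and accurate.

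However, your approach differs from the paper's. The paper does not give an explicit argument for this corollary: it is intended to follow immediately from the lemma just proved (that the algorithm of Definition~\ref{def2} coincides with Schensted's algorithm of Definition~\ref{def3}) together with the fact, stated earlier in the paper without proof, that iterated Schensted insertion of a row into a row yields a Young tableau. You mention this route yourself in your final paragraph. What your direct computation buys is self-containment: it does not rely on the unproved classical fact about Schensted insertion, and it extracts the dominance of $X$ over $Y$ straight from the recursive formulas. The paper's route is shorter on the page but leans on external background; yours is a genuine verification internal to Definition~\ref{def2}.
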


\subsection{ Expressions of reductions}\label{mulexp}

In order to prove the Theorem \ref{mainTH}, we have to check that
all possible compositions in $\Gamma$, which are only intersections,
are trivial.

Let $R=(r_1,r_2,\dots,r_n),\ S=(s_1,s_2,\dots,s_n),\
T=(t_1,t_2,\dots,t_n)\in U$.

For reductions, we will use the following notation.
\begin{eqnarray*}
RST&=&\left( \begin{array}{ccccc}
               r_1 & r_2 & r_3 & \dots & r_n \\
               s_1 & s_2 & s_3 & \dots & s_n \\
               t_1 & t_2 & t_3 & \dots & t_n
               \end{array} \right)
        \overset{\text{$S\cdot T=A\cdot B$}}{\Longrightarrow}\left( \begin{array}{ccccc}
                r_1 & r_2 & r_3 & \dots & r_n \\
                a_1 & a_2 & a_3 & \dots & a_n \\
                b_1 & b_2 & b_3 & \dots & b_n
                \end{array} \right)\\
    &\ \overset{\text{$R\cdot A=C\cdot D$}}{\Longrightarrow}&\left( \begin{array}{ccccc}
        c_1 & c_2 & c_3 & \dots & c_n \\
        d_1 & d_2 & d_3 & \dots & d_n \\
        b_1 & b_2 & b_3 & \dots & b_n
        \end{array} \right)
    \overset{\text{$D\cdot B=E\cdot F$}}{\Longrightarrow}\left( \begin{array}{ccccc}
    c_1 & c_2  & c_3 & \dots & c_n \\
    e_1 & e_2 & e_3 & \dots & e_n \\
    f_1 & f_2 & f_3 & \dots & f_n
    \end{array} \right),
\end{eqnarray*}
\begin{eqnarray*}
RST&=&\left( \begin{array}{ccccc}
               r_1 & r_2 & r_3 & \dots & r_n \\
               s_1 & s_2 & s_3 & \dots & s_n \\
               t_1 & t_2 & t_3 & \dots & t_n
               \end{array} \right)
         \overset{\text{$R\cdot S=G\cdot H$}}{\Longrightarrow}\left( \begin{array}{ccccc}
                g_1 & g_2 & g_3 & \dots & g_n \\
                h_1 & h_2 & h_3 & \dots & h_n \\
                t_1 & t_2 & t_3 & \dots & t_n
                \end{array} \right)\\
    &\overset{\text{$H\cdot T=I\cdot J$}}{\Longrightarrow}&\left( \begin{array}{ccccc}
        g_1 & g_2 & g_3 & \dots & g_n \\
        i_1 & i_2 & i_3 & \dots & i_n \\
        j_1 & j_2 & j_3 & \dots & j_n
        \end{array} \right)
    \overset{\text{$G\cdot I=K\cdot L$}}{\Longrightarrow}\left( \begin{array}{ccccc}
    k_1 & k_2 & k_3 & \dots & k_n \\
    l_1 & l_2 & l_3 & \dots & l_n \\
    j_1 & j_2 & j_3 & \dots & j_n
    \end{array} \right),
\end{eqnarray*}
where
\begin{eqnarray*}
&&A=(a_1,a_2,\dots,a_n),\ \ B=(b_1,b_2,\dots,b_n),\ \
C=(c_1,c_2,\dots,c_n),\\
&&D=(d_1,d_2,\dots,d_n),\ \ E=(e_1,e_2,\dots,e_n),\ \
F=(f_1,f_2,\dots,f_n),\\
&&G=(g_1,g_2,\dots,g_n),\ \ H=(h_1,h_2,\dots,h_n),\ \
I=(i_1,i_2,\dots,i_n),\\
&&J=(j_1,j_2,\dots,j_n),\ \ K=(k_1,k_2,\dots,k_n),\ \
L=(l_1,l_2,\dots,l_n),
\end{eqnarray*}
and $S\cdot T=A\cdot B,R\cdot A=C\cdot D,D\cdot B=E\cdot F, R\cdot
S=G\cdot H,H\cdot T=I\cdot J,G\cdot I=K\cdot L\in \Gamma$.

We will prove that
\begin{equation*}\label{e1}
\left( \begin{array}{ccccc}
c_1 & c_2  & c_3 & \dots & c_n \\
    e_1 & e_2 & e_3 & \dots & e_n \\
    f_1 & f_2 & f_3 & \dots & f_n
    \end{array} \right)
=\left( \begin{array}{ccccc}
    k_1 & k_2 & k_3 & \dots & k_n \\
    l_1 & l_2 & l_3 & \dots & l_n \\
    j_1 & j_2 & j_3 & \dots & j_n
    \end{array} \right),
\end{equation*}
which implies that the intersection composition $(RS,ST)_{RST}\equiv
0, mod(\Gamma,RST)$. Therefore,
 $\Gamma$ is a
Gr\"{o}bner-Shirshov basis of the algebra $F\langle U\mid
\Gamma\rangle$.

By the definition of the algorithm in Definition \ref{def2},
$a_1=c_1=c_2=e_1=g_1=i_1=k_1=k_2=l_1=0$.
Therefore, one needs only to show that $c_p=k_p$ and $e_q=l_q$ for all $3\le p\le
n,\ 2\le q\le n$.

\subsection{$c_p=k_p\ (p\ge 3)$} \label{cpkp}

We need the following lemmas to prove $c_p=k_p\ (p\ge 3)$.

\begin{lemma}\label{AleI}
For all $p\ (1\leq p\le n)$, $A_p\le I_p$.
\end{lemma}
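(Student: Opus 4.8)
The statement to prove is Lemma~\ref{AleI}: for all $p$ with $1\le p\le n$, $A_p\le I_p$. Recall the setup: we have three rows $R,S,T\in U$ and two ways of bracketing the product $RST$. On the one side we compute $S\cdot T=A\cdot B$ first, so $A_p=\min(S_{p-1},A_{p-1}+s_p)$ with $A_1=0$; on the other side we compute $R\cdot S=G\cdot H$ first and then $H\cdot T=I\cdot J$, so $I_p=\min(H_{p-1},I_{p-1}+h_p)$ with $I_1=0$, where $H_q=S_q+T_q-$ (the ``overflow'' from $R\cdot S$) $=S_q - (G_q - R_q)+\dots$; more precisely $h_q=s_q+r_q-g_q$ and $H_q=R_q+S_q-G_q$. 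So $A$ is the ``top part'' of the Schensted product $S\cdot T$, while $I$ is the ``top part'' of $H\cdot T$ where $H$ is obtained from $S$ by first inserting $R$ underneath it. Intuitively, inserting the extra row $R$ below $S$ can only make the row being bumped by $T$ ``smaller'' (more precisely, $H$ has the letters of $S$ replaced by the smaller letters of $R$ wherever bumping occurred), so fewer or equal letters are pushed up; hence $A_p\le I_p$. Wait—I need to double check the direction. Actually $A_p$ counts how much of $S$ (in its first $p$ coordinates' worth) overflows into the upper row when $T$ is inserted into $S$; $I_p$ is the analogous overflow when $T$ is inserted into $H$. Since $H\le S$ coordinatewise in the partial-sum sense ($H_q\le S_q$, because $G_q\ge R_q$ as $G$ dominates... hmm, actually $G_q = \min(S_{q-1}, G_{q-1}+r_q)\le S_{q-1}$, and $R_q\le G_q$ since $X\cdot Y$ form with $X$ on top, $x$'s are the overflow so $G_q\ge R_q$ is wrong direction). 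Let me not get bogged down; the key monotonicity fact I will need is a comparison between $H$ and $S$ via partial sums.

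The plan is as follows. First I would unwind the recursive definitions to express $A_p$ and $I_p$ purely in terms of $R,S,T$ via the $\min$/partial-sum formulas of Definition~\ref{def2}, and likewise record the formulas for $G_q,H_q$ coming from $R\cdot S=G\cdot H$. The central structural observation I want to establish is a coordinatewise (partial-sum) inequality relating $H$ and $S$, namely something like $S_{q}\le H_{q}+R_{q}$ together with $H_q\le S_q$ for all $q$ — i.e. $H$ ``sits between'' $S$ shifted down by $R$ and $S$ itself. These follow directly from $G_q=\min(S_{q-1},G_{q-1}+r_q)$ and $H_q=R_q+S_q-G_q$ by an easy induction. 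Second, with that in hand, I would prove $A_p\le I_p$ by induction on $p$. The base case $p=1$ is trivial since $A_1=I_1=0$ (and $p=2$ gives $A_2=\min(S_1,s_2)$, $I_2=\min(H_1,h_2)$, handled by the structural inequality). For the inductive step, from $A_{p-1}\le I_{p-1}$ I want to conclude $A_p=\min(S_{p-1},A_{p-1}+s_p)\le\min(H_{p-1},I_{p-1}+h_p)=I_p$. This needs two sub-inequalities: $A_{p-1}+s_p$ versus the two arguments of the right-hand $\min$, and $S_{p-1}$ versus them. The term-by-term comparison $A_{p-1}+s_p$ vs. $I_{p-1}+h_p$ reduces, using the inductive hypothesis, to controlling $s_p-h_p=g_p-r_p\ge 0$, which is exactly the statement that $G$ dominates / the overflow $g_p-r_p$ is nonnegative — itself immediate from $g_p=\min(S_{p-1}-G_{p-1},r_p)$... wait that gives $g_p\le r_p$. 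Hmm, so $s_p - h_p = g_p - r_p\le 0$, i.e. $h_p\ge s_p$. Interesting — so the per-coordinate letters of $H$ can be larger, while the partial sums satisfy $H_q\le S_q$. I will need to juggle both facts carefully; the $\min$ of a partial-sum cap and a running-sum-plus-increment is exactly designed so that these two opposite-direction inequalities cooperate.

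The main obstacle I expect is precisely this bookkeeping in the inductive step: because $H$ compares with $S$ in one direction on partial sums ($H_q\le S_q$) but in the opposite direction on individual coordinates ($h_q\ge s_q$), neither argument of the $\min$ defining $I_p$ is individually dominated by the corresponding argument of the $\min$ defining $A_p$ in an obvious way. I anticipate needing a small case analysis according to which of the two terms achieves each $\min$, or equivalently a lemma of the form: if $a\le \min(c',c'')$ fails to follow termwise, use $a=\min(b,a'+s)$ and push the inequality through whichever branch is active. A clean way to package this is to first prove an auxiliary monotonicity lemma — ``$\min(S_{p-1},\,t+s_p)$ is a nondecreasing function of $t$ and, as a function of the whole input row in the $<$-ordering on $U$, respects domination'' — and then apply it with $t=A_{p-1}$, $t'=I_{p-1}$ together with the structural inequalities on $H$. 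Once the monotonicity lemma is isolated, the induction is routine. I would also keep in mind the degenerate case $X=(0,\dots,0)$ flagged in Definition~\ref{def2} (when a Schensted product is itself a row), since then some of the $G$ or $A$ vectors vanish and the inequalities become equalities or trivialities; this case should be checked separately but causes no difficulty.
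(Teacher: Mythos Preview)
Your proposal contains a concrete error in the recursion formulas, and this error is precisely what makes you anticipate difficulties that are not there. From Definition~\ref{def2}, in a product $W\cdot Z=X\cdot Y$ the ``overflow'' partial sums are $X_p=\min(Z_{p-1},X_{p-1}+w_p)$: the cap is the partial sum of the \emph{right} factor $Z$, not the left factor $W$. Applying this to $S\cdot T=A\cdot B$ gives
\[
A_p=\min(T_{p-1},\,A_{p-1}+s_p),
\]
and applying it to $H\cdot T=I\cdot J$ gives
\[
I_p=\min(T_{p-1},\,I_{p-1}+h_p).
\]
You wrote $S_{p-1}$ and $H_{p-1}$ in place of $T_{p-1}$, which is wrong in both places. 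The whole discussion about partial sums of $H$ versus $S$ going one way while coordinates go the other way, and the need for a case analysis on which branch of the $\min$ is active, stems from this misreading.

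With the correct formulas the induction is immediate and matches the paper's proof. The first arguments of the two $\min$'s are identical ($T_{p-1}$), so there is nothing to compare there. For the second arguments you already observed the key inequality: $g_p=\min(S_{p-1}-G_{p-1},r_p)\le r_p$ forces $h_p=r_p+s_p-g_p\ge s_p$. Combined with the inductive hypothesis $A_{p-1}\le I_{p-1}$ this yields $A_{p-1}+s_p\le I_{p-1}+h_p$, and hence $A_p\le I_p$. No auxiliary monotonicity lemma or case split is needed.
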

\begin{proof} By the algorithm in Definition \ref{def2},
$A_1=a_1=i_1=I_1=0$. Assume that for all $p\le m-1\ (m\ge 2)$,
$A_p\le I_p$. Since $A_m=min(T_{m-1},A_{m-1}+s_m),\
I_m=min(T_{m-1},I_{m-1}+h_m)$ and $h_m=r_m+s_m-g_m
   =r_m+s_m-min(S_{m-1}-G_{m-1},r_m)\ge s_m$,
we have $A_m\le I_m$. Now by induction, the result follows.
\end{proof}

\begin{lemma}\label{KeqI}
Assume that $p\geq3, \ K_{p-1}=C_{p-1},\
K_p=C_p=A_{p-1}<C_{p-1}+r_p$ and $S_{p-1}-G_{p-1}\ge r_p$. Then
$K_p=I_{p-1}$.
\end{lemma}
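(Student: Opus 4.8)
The plan is to expand the two minima defining $g_p$ and $K_p$ by Definition~\ref{def2}, and then use the hypotheses to decide which branch of the minimum for $K_p$ is active.

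First I would read off from the reduction $R\cdot S=G\cdot H$ that $g_p=\min(S_{p-1}-G_{p-1},r_p)$, and from the reduction $G\cdot I=K\cdot L$ that $K_p=\min(I_{p-1},K_{p-1}+g_p)$. The hypothesis $S_{p-1}-G_{p-1}\ge r_p$ then forces $g_p=r_p$, and together with $K_{p-1}=C_{p-1}$ this yields $K_{p-1}+g_p=C_{p-1}+r_p$.

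Next I would use the remaining hypothesis $K_p=C_p=A_{p-1}<C_{p-1}+r_p$, which says exactly that $K_p<K_{p-1}+g_p$. Since $K_p=\min(I_{p-1},K_{p-1}+g_p)$ is strictly smaller than its second argument, the minimum must be attained at the first argument, i.e. $K_p=I_{p-1}$, which is the assertion.

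I do not anticipate a genuine obstacle here: the statement is a short and direct consequence of the recursions of Definition~\ref{def2}, so the difficulty is essentially zero once the set-up is in place. The only thing to watch is the bookkeeping — each of $g_p$, $K_p$ and $C_p$ must be unwound using the correct pair among the six reductions $S\cdot T=A\cdot B$, $R\cdot A=C\cdot D$, $D\cdot B=E\cdot F$, $R\cdot S=G\cdot H$, $H\cdot T=I\cdot J$, $G\cdot I=K\cdot L$, without accidentally mixing rows coming from different reduction sequences.
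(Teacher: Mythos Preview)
Your proposal is correct and is essentially identical to the paper's own argument: both unfold $g_p=\min(S_{p-1}-G_{p-1},r_p)$ and $K_p=\min(I_{p-1},K_{p-1}+g_p)$, use the hypothesis to get $g_p=r_p$ and hence $K_{p-1}+g_p=C_{p-1}+r_p$, and then observe that $K_p<K_{p-1}+g_p$ forces $K_p=I_{p-1}$. Your cautionary remark about tracking which reduction each recursion comes from is well taken but, as you anticipated, there is no real obstacle.
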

\begin{proof} Note that $ C_p=min(A_{p-1},C_{p-1}+r_p)$ and $
K_p=min(I_{p-1},K_{p-1}+g_p)$. Since $S_{p-1}-G_{p-1}\ge r_p,$
$g_p=min(S_{p-1}-G_{p-1},r_p)=r_p.$ Therefore
$K_p=C_p=A_{p-1}<C_{p-1}+r_p=K_{p-1}+g_p$, which concludes
$K_p=I_{p-1}$.
\end{proof}

\begin{lemma}\label{CCr2SGr}
Assume that  $p\geq2,\ C_p=K_p,\ C_{p+1}=K_{p+1}$ and
$C_{p+1}=C_p+r_{p+1}$. Then $S_p-G_p\ge r_{p+1}$.
\end{lemma}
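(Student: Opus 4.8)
The plan is to unwind the relevant instances of the recursion in Definition \ref{def2} and read the conclusion off directly from the behaviour of the $\min$ operations; no induction is needed, since the statement is purely local at the index $p+1$.

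First I would record the two recursive formulas that matter. From $R\cdot S=G\cdot H\in\Lambda$ and Definition \ref{def2} we have
$$g_{p+1}=\min(S_p-G_p,\ r_{p+1}),$$
and from $G\cdot I=K\cdot L\in\Lambda$ we have
$$K_{p+1}=\min(I_p,\ K_p+g_{p+1}).$$
Both are legitimate because $p\ge 2$ puts $p+1$ in the range $m\ge 2$ where the recursion $x_m=\min(Z_{m-1}-X_{m-1},w_m)$ of Definition \ref{def2} applies.

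Next I would combine these with the hypotheses. Since $C_p=K_p$ and $C_{p+1}=K_{p+1}$, the assumption $C_{p+1}=C_p+r_{p+1}$ rewrites as $K_{p+1}=K_p+r_{p+1}$. On the other hand the formula for $K_{p+1}$ gives $K_{p+1}\le K_p+g_{p+1}$, so $K_p+r_{p+1}\le K_p+g_{p+1}$, i.e.\ $r_{p+1}\le g_{p+1}$. But the formula for $g_{p+1}$ always yields $g_{p+1}\le r_{p+1}$, whence $g_{p+1}=r_{p+1}$. Substituting back into $g_{p+1}=\min(S_p-G_p,\ r_{p+1})=r_{p+1}$ forces $S_p-G_p\ge r_{p+1}$, which is the claim.

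I do not expect a genuine obstacle here: the argument is a short chase through the $\min$ formulas. The only point requiring mild care is the index bookkeeping — checking that $p\ge 2$ keeps every invoked instance of the Definition \ref{def2} recursion inside its valid range, and that the quantities $G_p$, $K_p$, $I_p$ are the partial sums defined there. Everything else is immediate, so this lemma is essentially a one-paragraph computation.
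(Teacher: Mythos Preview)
Your proof is correct and is essentially the same argument as the paper's: both use $K_{p+1}=\min(I_p,\,K_p+g_{p+1})$ together with $g_{p+1}=\min(S_p-G_p,\,r_{p+1})$ and the hypothesis $K_{p+1}=K_p+r_{p+1}$ to force $S_p-G_p\ge r_{p+1}$. The only cosmetic difference is that the paper substitutes for $g_{p+1}$ immediately, expanding $K_{p+1}=\min(I_p,\,K_p+S_p-G_p,\,K_p+r_{p+1})$, and reads off both $I_p\ge K_p+r_{p+1}$ and $S_p-G_p\ge r_{p+1}$ at once, whereas you first pin down $g_{p+1}=r_{p+1}$ and then back-substitute.
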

\begin{proof} Note that $C_{p+1}=min(A_p,C_p+r_{p+1})$ and
$K_{p+1}=min(I_p,K_p+g_{p+1})
      =min(I_p,K_p+S_p-G_p,K_p+r_{p+1})$.
Since $C_p=K_p,  C_{p+1}=K_{p+1}$ and $C_{p+1}=C_p+r_{p+1}$, $
K_{p+1}=C_{p+1}
        =C_p+r_{p+1}
        =K_p+r_{p+1}.
$

Therefore, $I_p\ge K_p+r_{p+1}$, $K_p+S_p-G_p\ge K_p+r_{p+1}$, which
concludes $S_p-G_p\ge r_{p+1}$.
\end{proof}

\subsubsection{$C_p=K_p\ (3\leq p\leq n)$} \label{c3k3}

\begin{proof} Induction on $p$.

Since $c_1=c_2=k_1=k_2=0$, we have  $C_3=c_3$ and $K_3=k_3$.
According to the algorithm in Definition \ref{def2}, $
c_3=min(a_2,r_3)
   =min[min(t_1,s_2),r_3]
   =min(t_1,s_2,r_3)$ and $k_3=min(i_2,g_3)
   =min[min(t_1,h_2),min(s_1+s_2-g_2,r_3)]
   =min[t_1,r_2+s_2-min(s_1,r_2),s_1+s_2-min(s_1,r_2),r_3]
   =min[t_1,s_2+min(r_2,s_1)-min(s_1,r_2),r_3]
   =min(t_1,s_2,r_3).
$
Therefore, $C_3=K_3=min(t_1,s_2,r_3)$.

Assume that for all $3\leq p\le m-1$, $C_p=K_p$. Note that
\begin{eqnarray*}
&&C_m=min(T_{m-2},A_{m-2}+s_{m-1},C_{m-1}+r_m), \\
&&K_m=min(T_{m-2},I_{m-2}+h_{m-1},K_{m-1}+M,K_{m-1}+r_m),
\end{eqnarray*}
where $M=s_{m-1}+S_{m-2}-G_{m-2}-g_{m-1}$.

Observe the following equations
\begin{eqnarray*}
C_{m-t}=min(A_{m-t-1},C_{m-t-1}+r_{m-t}), \ \ \ 1\leq t\leq m-3
\end{eqnarray*}
and denote $q$ the cardinality of the following set
$$
\{i|C_{m-j}=C_{m-j-1}+r_{m-j},\ 1\leq j\leq i\leq m-3\}.
$$

There are three cases to consider.

Case 1. $q=0$, i.e.  $C_{m-1}=A_{m-2}<C_{m-2}+r_{m-1}$.

If $S_{m-2}-G_{m-2}<r_{m-1}$, then
$g_{m-1}=S_{m-2}-G_{m-2}, \
h_{m-1}=r_{m-1}+s_{m-1}-S_{m-2}+G_{m-2}>s_{m-1},\ M=s_{m-1}$ and $
I_{m-2}+h_{m-1}\ge K_{m-1}+h_{m-1}>K_{m-1}+s_{m-1}. $

Therefore,
$K_m=C_m=min(T_{m-2},C_{m-1}+s_{m-1},C_{m-1}+r_m)$.

If $S_{m-2}-G_{m-2}\ge r_{m-1}$, then
$g_{m-1}=r_{m-1},\ h_{m-1}=s_{m-1}$ and $M\ge
s_{m-1}$. By Lemma \ref{KeqI}, we have $ K_{m-1}=I_{m-2}$ and $
I_{m-2}+h_{m-1}=K_{m-1}+s_{m-1}$.

Therefore,
$K_m=C_m=min(T_{m-2},C_{m-1}+s_{m-1},C_{m-1}+r_m)$.

Case 2. $1\leq q\leq m-4$, i.e. for all $1\leq k\le q$, $C_{m-k}=C_{m-k-1}+r_{m-k}$, $C_{m-k-1}+r_{m-k}\le A_{m-k-1}$, $A_{m-q-2}< C_{m-q-2}+r_{m-q-1}$ and $C_{m-1}=A_{m-q-2}+\sum_{i=1}^{q}r_{m-i}$.
Then $ C_{m-k}=C_{m-k-1}+r_{m-k} $ and by Lemma
\ref{CCr2SGr}, we have $S_{m-k-1}-G_{m-k-1}\ge r_{m-k}, \
g_{m-k}=r_{m-k}, \ h_{m-k}=s_{m-k}$,
\begin{eqnarray*}
&&C_m=min(T_{m-2},T_{m-3}+s_{m-1},\dots,T_{m-q-2}+\sum_{i=1}^{q}s_{m-i},
A_{m-q-2}+\sum_{i=1}^{q+1}s_{m-i},C_{m-1}+r_m),\\
&&K_m=min(T_{m-2},T_{m-3}+s_{m-1},\dots,T_{m-q-2}+\sum_{i=1}^{q}s_{m-i},\\
&&\ \ \ \ \ \ \ \ \ \ \ \ \ \ \ \ \
I_{m-q-2}+h_{m-q-1}+\sum_{i=1}^{q}s_{m-i},
K_{m-q-1}+\sum_{i=1}^{q}r_{m-i}+M,K_{m-1}+r_m),
\end{eqnarray*}
where
$M=\sum_{i=1}^{q+1}s_{m-i}+S_{m-q-2}-G_{m-q-2}-g_{m-q-1}-\sum_{i=1}^{q}r_{m-i}$.

If $A_{m-q-2}<C_{m-q-2}+r_{m-q-1}$ and
$S_{m-q-2}-G_{m-q-2}<r_{m-q-1}$, then
\begin{eqnarray*}
&&C_{m-q-1}=A_{m-q-2},\\
&&g_{m-q-1}=S_{m-q-2}-G_{m-q-2},\\
&&h_{m-q-1}=r_{m-q-1}+s_{m-q-1}-S_{m-q-2}+G_{m-q-2}>s_{m-q-1},\\
&&M=\sum_{i=1}^{q+1}s_{m-i}-\sum_{i=1}^{q}r_{m-i},\\
&&K_{m-q-1}+\sum_{i=1}^{q}r_{m-i}+M=K_{m-q-1}+\sum_{i=1}^{q+1}s_{m-i},\\
&&I_{m-q-2}+h_{m-q-1}+\sum_{i=1}^{q}s_{m-i}>K_{m-q-1}+\sum_{i=1}^{q+1}s_{m-i}.
\end{eqnarray*}
Therefore,
\begin{eqnarray*}
K_m&=&min(T_{m-2},T_{m-3}+s_{m-1},\dots,T_{m-q-2}+\sum_{i=1}^{q}s_{m-i},
C_{m-q-1}+\sum_{i=1}^{q+1}s_{m-i},C_{m-1}+r_m)\\
   &=&C_m.
\end{eqnarray*}

If $A_{m-q-2}<C_{m-q-2}+r_{m-q-1}$ and
$S_{m-q-2}-G_{m-q-2}\ge r_{m-q-1}$, then $C_{m-q-1}=A_{m-q-2},$
$g_{m-q-1}=r_{m-q-1}$ and $h_{m-q-1}=s_{m-q-1}$. By Lemma
\ref{KeqI},
\begin{eqnarray*}
&&K_{m-q-1}=I_{m-q-2},\\
&&M\ge \sum_{i=1}^{q+1}s_{m-i}-\sum_{i=1}^{q}r_{m-i},\\
&&K_{m-q-1}+\sum_{i=1}^{q}r_{m-i}+M\ge
K_{m-q-1}+\sum_{i=1}^{q+1}s_{m-i},\\
&&I_{m-q-2}+h_{m-q-1}+\sum_{i=1}^{q}s_{m-i}=K_{m-q-1}+\sum_{i=1}^{q+1}s_{m-i}.
\end{eqnarray*}
Therefore,
\begin{eqnarray*}
K_m=min(T_{m-2},T_{m-3}+s_{m-1},\dots,
T_{m-q-2}+\sum_{i=1}^{q}s_{m-i},C_{m-q-1}+\sum_{i=1}^{q+1}s_{m-i},C_{m-1}+r_m)=C_m.
\end{eqnarray*}

Case 3. $q=m-3$, i.e. for all $1\leq k\le q=m-3$, $C_{m-k}=C_{m-k-1}+r_{m-k}$,
$C_{m-k-1}+r_{m-k}\le A_{m-k-1}$ which implies $C_{m-1}=C_2+\sum_{i=1}^{m-3}r_{m-i}$.
Now, by Lemma \ref{CCr2SGr},
$S_{m-k-1}-G_{m-k-1}\ge r_{m-k},\ g_{m-k}=r_{m-k},\ h_{m-k}=s_{m-k}$
and $ M=\sum_{i=1}^{m-2}s_{m-i}+S_1-G_1-g_2-\sum_{i=1}^{m-3}r_{m-i}
 =\sum_{i=1}^{m-2}s_{m-i}+s_1-g_2-\sum_{i=1}^{m-3}r_{m-i}$.
Thus, we have
\begin{eqnarray*}
C_m&=&min(T_{m-2},T_{m-3}+s_{m-1},\dots,T_1+\sum_{i=1}^{m-3}s_{m-i},A_1+\sum_{i=1}^{m-2}s_{m-i},C_{m-1}+r_m)\\
   &=&min(T_{m-2},T_{m-3}+s_{m-1},\dots,T_1+\sum_{i=1}^{m-3}s_{m-i},\sum_{i=1}^{m-2}s_{m-i},C_{m-1}+r_m)\
   \mbox{ and }\\
K_m&=&min(T_{m-2},T_{m-3}+s_{m-1},\dots,T_1+\sum_{i=1}^{m-3}s_{m-i},I_1+h_2+\sum_{i=1}^{m-3}s_{m-i},\\
   & &\ \ \ \ \ \ \ K_2+\sum_{i=1}^{m-3}r_{m-i}+M,K_{m-1}+r_m)\\
   &=&min(T_{m-2},T_{m-3}+s_{m-1},\dots,T_1+\sum_{i=1}^{m-3}s_{m-i},\\
        & & \ \ \ \ \ \ \
        \sum_{i=1}^{m-2}s_{m-i}+min(h_2-s_2,s_1-g_2),K_{m-1}+r_m).
\end{eqnarray*}
Since
$min(h_2-s_2,s_1-g_2)=min(r_2-min(s_1,r_2),s_1-min(s_1,r_2))=0$, one
gets
$
K_m=min(T_{m-2},T_{m-3}+s_{m-1},\dots,T_1+\sum_{i=1}^{m-3}s_{m-i},\sum_{i=1}^{m-2}s_{m-i},K_{m-1}+r_m)=C_m.
$

So, the proof of $K_m=C_m$ is complete.
\end{proof}

\ \

Therefore, we have shown that $C_p=K_p\ (1\leq p\leq n)$ which, by
using the formulas in Definition \ref{def2}, is clearly equivalent
to $c_p=k_p\ (1\leq p\leq n)$.

\subsection{$e_p=l_p$ $(p\ge 2)$}

We need the following lemmas to prove $e_p=l_p\ (p\ge 2)$.

\begin{lemma}\label{SCGAgezero}
For any $p\ (p\le n)$, $S_p+C_p-G_p-A_p\ge 0$.
\end{lemma}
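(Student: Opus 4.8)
The plan is to prove the equivalent statement $\Delta_p := S_p + C_p - G_p - A_p \ge 0$ for all $p$ by induction on $p$, working directly from the recursions of Definition \ref{def2} (recall $C_p = c_1 + \dots + c_p$, and similarly for the other partial sums). For the base I would simply observe that $A_0 = C_0 = G_0 = 0$ and, as already noted, $A_1 = C_1 = G_1 = 0$, so that $\Delta_0 = 0$ and $\Delta_1 = s_1 \ge 0$.

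For the inductive step I would first specialize Definition \ref{def2} to the three reductions that matter here, namely $S\cdot T = A\cdot B$, $R\cdot A = C\cdot D$, and $R\cdot S = G\cdot H$, which give
\[
A_p=\min(T_{p-1},\,A_{p-1}+s_p),\qquad
C_p=\min(A_{p-1},\,C_{p-1}+r_p),\qquad
G_p=\min(S_{p-1},\,G_{p-1}+r_p).
\]
Taking increments, $A_p-A_{p-1}=\min(T_{p-1}-A_{p-1},\,s_p)$, $C_p-C_{p-1}=\min(A_{p-1}-C_{p-1},\,r_p)$, and $G_p-G_{p-1}=\min(S_{p-1}-G_{p-1},\,r_p)$, so, using $S_p - S_{p-1} = s_p$, these combine to
\[
\Delta_p-\Delta_{p-1}=\bigl(s_p-\min(T_{p-1}-A_{p-1},\,s_p)\bigr)-\bigl(\min(S_{p-1}-G_{p-1},\,r_p)-\min(A_{p-1}-C_{p-1},\,r_p)\bigr).
\]
The first bracket is $\ge 0$ because $\min(T_{p-1}-A_{p-1},s_p)\le s_p$. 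For the second bracket, the induction hypothesis says $S_{p-1}-G_{p-1}-(A_{p-1}-C_{p-1})=\Delta_{p-1}\ge 0$, and since $t\mapsto\min(t,r_p)$ is non-decreasing and $1$-Lipschitz, that bracket lies in the interval $[0,\Delta_{p-1}]$. Hence $\Delta_p-\Delta_{p-1}\ge -\Delta_{p-1}$, so $\Delta_p\ge 0$, which closes the induction.

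I do not anticipate a genuine obstacle: once the recursions are in hand, the whole argument reduces to the monotonicity and $1$-Lipschitz property of truncation by $r_p$ together with the inductive hypothesis. The only thing requiring care is the bookkeeping — getting the identification of $(W,Z,X,Y)$ right in each of the three applications of Definition \ref{def2}, and checking that the small indices $p=1,2$, where several of these partial sums are forced to vanish, fit the recursions rather than being exceptions to them.
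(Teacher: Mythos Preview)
Your proof is correct. Both you and the paper argue by induction on $p$, but the routes through the inductive step differ.

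The paper splits into two cases according to which branch of $C_m=\min(A_{m-1},\,C_{m-1}+r_m)$ is attained. In the case $C_m=A_{m-1}$ it bounds $G_m\le S_{m-1}$ and $A_m\le A_{m-1}+s_m$ directly. In the case $C_m=C_{m-1}+r_m$ it invokes the already-proved identity $C_p=K_p$ from Section~4.3 together with Lemma~\ref{CCr2SGr} to conclude $S_{m-1}-G_{m-1}\ge r_m$, hence $g_m=r_m$, so that $\Delta_m=\Delta_{m-1}+(s_m-a_m)\ge 0$. Your argument replaces this detour through the $K$-variables by the single observation that $t\mapsto\min(t,r_p)$ is monotone and $1$-Lipschitz; applied to $u=S_{p-1}-G_{p-1}$ and $v=A_{p-1}-C_{p-1}$ with $u-v=\Delta_{p-1}\ge 0$, this immediately gives $0\le g_p-c_p\le\Delta_{p-1}$, which is exactly what the paper extracts from Lemma~\ref{CCr2SGr} in the second case. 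The payoff is that your proof is case-free and self-contained (it never touches $I$, $K$, or Section~4.3), whereas the paper's proof relies on machinery built for the $C_p=K_p$ computation. In fact, the inequality $S_{m-1}-G_{m-1}\ge r_m$ that the paper obtains via Lemma~\ref{CCr2SGr} follows in one line from your induction hypothesis: $S_{m-1}-G_{m-1}\ge A_{m-1}-C_{m-1}\ge r_m$ when $C_{m-1}+r_m\le A_{m-1}$.
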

\begin{proof} For $p=1$, we have
$S_{1}+C_{1}-G_{1}-A_{1}=s_1\ge 0$. Assume that for all $p\le m-1\
(m\ge 2)$, $S_p+C_p-G_p-A_p\ge 0$.

If $A_{m-1}<C_{m-1}+r_m$, then $C_m=A_{m-1}$ and
$
S_m+C_m-G_m-A_m \ge S_m+A_{m-1}-S_{m-1}-A_{m-1}-s_m=0.
$

If $C_{m-1}+r_m\le A_{m-1}$, then $C_m=C_{m-1}+r_m$. By Lemma
\ref{CCr2SGr}, we have $S_{m-1}-G_{m-1}\ge r_m,\ \ \
G_m=G_{m-1}+r_m.$ So,
$S_m+C_m-G_m-A_m=S_m+C_{m-1}+r_m-G_{m-1}-r_m-A_m
               =S_m+C_{m-1}-G_{m-1}-A_m
               =S_{m-1}+C_{m-1}-G_{m-1}-A_{m-1}+s_m-a_m
               \ge  s_m-a_m
               =s_m-min(T_{m-1}-A_{m-1},s_m)
               \ge  0.$

The proof is complete.
\end{proof}

\begin{lemma}\label{AeqI}
If $S_p+C_p-G_p-A_p>0$, then  $A_p=I_p$.
\end{lemma}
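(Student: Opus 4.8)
The plan is to reformulate the statement and prove it by induction on $p$. Since $A_p\le I_p$ for all $p$ by Lemma~\ref{AleI} and $S_p+C_p-G_p-A_p\ge 0$ for all $p$ by Lemma~\ref{SCGAgezero}, the lemma is equivalent to the implication: if $A_p<I_p$, then $S_p+C_p-G_p-A_p=0$. I would prove this implication by induction on $p$. The base case $p=1$ is trivial because $A_1=I_1=0$, so the hypothesis $A_1<I_1$ never occurs.

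For the inductive step, suppose $A_p<I_p$ with $p\ge 2$, and assume the implication holds for $p-1$. First I would note that $A_p=\min(T_{p-1},A_{p-1}+s_p)$ cannot equal $T_{p-1}$: otherwise $A_p\le I_p\le T_{p-1}=A_p$ would give $A_p=I_p$, a contradiction. Hence $A_p=A_{p-1}+s_p$ and $a_p=s_p$. Now split into two cases, which are exhaustive by Lemma~\ref{AleI}. \emph{Case $A_{p-1}<I_{p-1}$.} By the induction hypothesis $S_{p-1}+C_{p-1}-G_{p-1}-A_{p-1}=0$, so $S_{p-1}-G_{p-1}=A_{p-1}-C_{p-1}$, which forces $g_p=\min(S_{p-1}-G_{p-1},r_p)=\min(A_{p-1}-C_{p-1},r_p)=c_p$. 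Substituting into $S_p+C_p-G_p-A_p=(S_{p-1}+C_{p-1}-G_{p-1}-A_{p-1})+(c_p-g_p)$ gives $0$. \emph{Case $A_{p-1}=I_{p-1}$.} Then $A_p=I_{p-1}+s_p$, and from $A_p<I_p=\min(T_{p-1},I_{p-1}+h_p)$ we get $h_p>s_p$, hence $g_p<r_p$ since $h_p=r_p+s_p-g_p$, hence $g_p=S_{p-1}-G_{p-1}$ and $G_p=\min(S_{p-1},G_{p-1}+r_p)=S_{p-1}$. Therefore $S_p+C_p-G_p-A_p=C_p-A_{p-1}\le 0$ because $C_p=\min(A_{p-1},C_{p-1}+r_p)\le A_{p-1}$, and combining with Lemma~\ref{SCGAgezero} this is exactly $0$. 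This completes the induction, and hence the lemma.

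The argument is short, so I do not expect a serious obstacle beyond the bookkeeping with the various $\min$'s from Definition~\ref{def2}. The one point that needs care is the choice of what to induct on: one should run the induction on the reformulated equivalent statement, so that the equality $S_{p-1}+C_{p-1}-G_{p-1}-A_{p-1}=0$ is available precisely in the case $A_{p-1}<I_{p-1}$; and the boundary subcase $A_p=T_{p-1}$ must be disposed of first, since there the recursion defining $A_p$ degenerates and the above manipulations do not apply.
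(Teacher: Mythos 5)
Your proof is correct, and it takes a genuinely different route from the paper's. The paper proves the implication directly by induction on $p$: in the inductive step it goes back into the \emph{proof} of Lemma~\ref{SCGAgezero} to enumerate four subcases describing how the strict inequality $S_m+C_m-G_m-A_m>0$ can arise, and then closes each subcase using Lemma~\ref{AleI}, Lemma~\ref{KeqI}, Lemma~\ref{CCr2SGr} and the already-established identity $C_p=K_p$. You instead pass to the contrapositive reformulation ``$A_p<I_p\Rightarrow S_p+C_p-G_p-A_p=0$,'' which is legitimate precisely because Lemmas~\ref{AleI} and~\ref{SCGAgezero} pin down the signs of both quantities, and then a two-case split on whether $A_{p-1}=I_{p-1}$ or $A_{p-1}<I_{p-1}$ suffices. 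I checked the key computations: in the first case $a_p=s_p$ and $g_p=\min(A_{p-1}-C_{p-1},r_p)=c_p$ make the telescoped sum vanish, and in the second case $G_p=S_{p-1}$ reduces everything to $C_p-A_{p-1}\le 0$, which Lemma~\ref{SCGAgezero} forces to be $0$; the preliminary disposal of the boundary subcase $A_p=T_{p-1}$ is also handled correctly. What your approach buys is economy and cleaner dependencies: it uses only the \emph{statements} of Lemmas~\ref{AleI} and~\ref{SCGAgezero} (not their proofs), dispenses with Lemmas~\ref{KeqI} and~\ref{CCr2SGr} and with the $C=K$ result, and replaces four cases by two. The paper's version, by contrast, keeps the lemma in the form in which it is later applied and reuses machinery already set up for the $c_p=k_p$ argument.
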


\begin{proof}
For $p=1$, we have $A_1=I_1=0$. Assume that for all $p\le m-1\ (m\ge
2)$, $A_p=I_p$ if $S_p+C_p-G_p-A_p>0$. By using Lemma
\ref{SCGAgezero}, $S_m+C_m-G_m-A_m\ge 0$. Suppose that
$S_m+C_m-G_m-A_m>0$. Then by the proof of Lemma \ref{SCGAgezero},
one of the following four conditions should be satisfied.

(i) $A_{m-1}<C_{m-1}+r_m$, $A_m=T_{m-1}<A_{m-1}+s_m$;

(ii) $A_{m-1}<C_{m-1}+r_m$, $G_m=G_{m-1}+r_m<S_{m-1}$;

(iii) $C_{m-1}+r_m\le A_{m-1}$, $S_{m-1}+C_{m-1}-G_{m-1}-A_{m-1}>0$;

(iv) $C_{m-1}+r_m\le A_{m-1}$, $S_{m-1}+C_{m-1}-G_{m-1}-A_{m-1}=0$,
$s_m>a_m$.

Assume (i). Then $ C_m=A_{m-1}$, and by Lemma \ref{AleI},
                       $A_m=T_{m-1}<A_{m-1}+s_m\le I_{m-1}+h_m$. So,
                       $ A_m=I_m=T_{m-1}$.

Assume (ii). Then $C_m=A_{m-1},\ g_m=r_m,\ h_m=s_m$, and by
                      Lemma \ref{KeqI}, $K_m=I_{m-1}$. Thus,
                      $A_m=I_m=min(T_{m-1},C_m+s_m)$.

Assume (iii). Then $C_m=C_{m-1}+r_m$. Since $K_m=C_m$ and
$K_{m-1}=C_{m-1}$, by Lemma \ref{CCr2SGr}, we have
                       $S_{m-1}-G_{m-1}\ge r_m$, $g_m=r_m$, $h_m=s_m$ and
                       $A_{m-1}=I_{m-1}$. So,
                       $A_m=I_m=min(T_{m-1},A_{m-1}+s_m)$.

Assume (iv). Then $C_m=C_{m-1}+r_m$ and $a_m=T_{m-1}-A_{m-1}$. By
Lemma \ref{AleI}, $A_m=T_{m-1}<A_{m-1}+s_m\le I_{m-1}+h_m$. So,
                       $A_m=I_m=T_{m-1}$.
\end{proof}

\ \

We use induction on $p$ to prove that $e_p=l_p$ $(2\leq p\leq n)$.

\subsubsection{$e_2=l_2$} \label{e2l2}

\begin{proof}
$e_2=min(b_1,d_2)=min(s_1+t_1,r_2+a_2)=min(s_1+t_1,r_2+min(t_1,s_2))=min(s_1+t_1,t_1+r_2,r_2+s_2)=
min(s_1,r_2)+min(t_1,r_2+s_2-min(s_1,r_2))=min(s_1,r_2)+min(t_1,h_2)=g_2+i_2=l_2$.
\end{proof}

\subsubsection{$e_p=l_p$ $(p\ge 3)$} \label{enln}

\begin{proof}
Assume that for any $p\le m-1\ (m\ge 3)$, $e_p=l_p$.
Since $E_{m-1}=L_{m-1}=G_{m-1}+I_{m-1}-K_{m-1}$ and
$K_{m-1}=C_{m-1}=C_m-c_m$, we have
\begin{eqnarray*}
e_m&=&min(B_{m-1}-E_{m-1},d_m)\\
    &=&min(S_{m-1}+T_{m-1}-A_{m-1}-E_{m-1},r_m+a_m-c_m)\\
    &=&min(S_{m-1}+T_{m-1}-A_{m-1}-E_{m-1},r_m+min(T_{m-1}-A_{m-1},s_m)-c_m)\\
    &=&min(S_{m-1}+T_{m-1}+C_m-A_{m-1}-G_{m-1}-I_{m-1},r_m+T_{m-1}-A_{m-1},\\
    & &\ \ \ \ r_m+s_m)-c_m,\\
l_m&=&g_m+i_m-k_m\\
    &=&min(S_{m-1}-G_{m-1},r_m)+min(T_{m-1}-I_{m-1},h_m)-k_m\\
    &=&min(S_{m-1}+T_{m-1}-G_{m-1}-I_{m-1},S_{m-1}-G_{m-1}+h_m,\\
    & &\ \ \ \ r_m+T_{m-1}-I_{m-1},r_m+h_m)-k_m.
\end{eqnarray*}
There are two cases to consider.

Case 1. $A_{m-1}<C_{m-1}+r_m$.
Then $C_m=A_{m-1}$.

If $S_{m-1}-G_{m-1}<r_m$, then $g_m=S_{m-1}-G_{m-1},\
h_m=r_m+s_m-S_{m-1}+G_{m-1}>s_m,\
S_{m-1}-G_{m-1}+h_m=r_m+s_m<r_m+h_m
$
and by Lemma \ref{AleI},
\begin{eqnarray*}
S_{m-1}+T_{m-1}-G_{m-1}-I_{m-1}&<&r_m+T_{m-1}-I_{m-1}
                               \le r_m+T_{m-1}-A_{m-1}.
\end{eqnarray*}
Therefore,
$l_m=e_m=min(S_{m-1}+T_{m-1}-G_{m-1}-I_{m-1},r_m+s_m)-c_m$.

If $S_{m-1}-G_{m-1}\ge r_m$, then $g_m=r_m,\ h_m=s_m$ and by Lemma
\ref{KeqI}, we have $K_m=I_{m-1},\
S_{m-1}+T_{m-1}-G_{m-1}-I_{m-1}\ge
r_m+T_{m-1}-I_{m-1}=r_m+T_{m-1}-K_m$ and
$S_{m-1}-G_{m-1}+h_m=S_{m-1}-G_{m-1}+s_m\ge r_m+s_m$. Therefore,
$l_m=e_m=min(T_{m-1}-C_m,s_m)+r_m-c_m$.

Case 2. $C_{m-1}+r_m\le A_{m-1}$.
Then $C_m=C_{m-1}+r_m$, and by Lemma \ref{CCr2SGr}, we have
$S_{m-1}-G_{m-1}\ge r_m,\ g_m=r_m,\ h_m=s_m,\
S_{m-1}+T_{m-1}-G_{m-1}-I_{m-1}\ge r_m+T_{m-1}-I_{m-1},\
S_{m-1}-G_{m-1}+h_m\ge r_m+h_m=r_m+s_m,\
e_m=min(S_{m-1}+T_{m-1}+C_{m-1}-A_{m-1}-G_{m-1}-I_{m-1},T_{m-1}-A_{m-1},s_m)+r_m-c_m$
and $l_m=min(T_{m-1}-I_{m-1},s_m)+r_m-k_m$.

By Lemma \ref{SCGAgezero}, $S_{m-1}+C_{m-1}-G_{m-1}-A_{m-1}\ge 0$.

If $S_{m-1}+C_{m-1}-G_{m-1}-A_{m-1}=0$, by Lemma \ref{AleI}, $
e_m=min(T_{m-1}-I_{m-1},T_{m-1}-A_{m-1},s_m)+r_m-c_m
   =min(T_{m-1}-I_{m-1}, s_m)+r_m-k_m
  =l_m$.

If $S_{m-1}+C_{m-1}-G_{m-1}-A_{m-1}>0$, by Lemma \ref{AeqI}, we have
$A_{m-1}=I_{m-1}$ and
$S_{m-1}+T_{m-1}+C_{m-1}-A_{m-1}-G_{m-1}-I_{m-1}>T_{m-1}-A_{m-1}$.
Therefore, $e_m=min(T_{m-1}-A_{m-1},s_m)+r_m-c_m
=min(T_{m-1}-I_{m-1},s_m)+r_m-k_m =l_m$.
\end{proof}

\section{Plactic monoid with column generators}\noindent

Let $A=\{1,2,\dots,n\}$. Recall that a strictly decreasing word $C\in A^*$ is
called a column, for example, 6531 is a column.

Let $R,S \in A^*$ be two columns, $R=i_ti_{t-1}\dots i_2i_1$,
$S=j_lj_{l-1}\dots j_2j_1$. We define $R \rhd S$ if $t\geq l$  and
$i_k\leq j_k,\ k=1,2,\dots,l$.

A (semistandard) Young tableau on A is a word $w=R_1R_2\dots R_t$
such that $R_i\rhd R_{i+1},\ i=1,2,\dots t-1$, where each $R_i$ is a
column. For example,
$$
421\cdot 521\cdot531\cdot632\cdot54\cdot74\cdot4
$$
is a Young tableau.

{\bf Remark:} We use two ways to define a Young tableau which are
essentially the same.

Let $C\in A^*$ be a column and $c_i$  the number of letter $i\ (1\le
i\le n)$ in $C$. Then $c_i\in \{0,1\},\ i=1,2,\dots,n$. We denote
$C=(c_1;c_2;\dots;c_n)$. For example,
$C=6421=(1;1;0;1;0;1;0;\dots;0).$

Let $V=\{C\in A^*\ |\ C \mbox{ is a column}\}$.

Let $R=(r_1;r_2;\dots;r_n )\in V$ and $wt(R)=(|R|,r_1,\dots,r_n)$.
We order $V$: for any $R, S\in V$, $ R<S $ if and  only if
$wt(R)>wt(S)$ lexicograghically. Then, we order $V^*$ by the deg-lex
ordering. We will use this ordering throughout this section.

Denote
$$
R_0=0,\ \ \ R_p=\sum_{i=1}^{p} r_i,\ 1\le p \le n.
$$
\begin{lemma}\label{lemma9}
For any $R,S\in V$, $R\rhd S$  if and only if $R_p\ge S_p,\ 1\le p \le n$.
\end{lemma}
\begin{proof}
Let $R=i_ti_{t-1}\dots
i_2i_1,\ S=j_lj_{l-1}\dots j_2j_1 \in V$. Then $t=R_n,l=S_n$.

Suppose that $R\rhd S$. Then by definition, we have $t\ge l$ and $i_1\le
j_1,i_2\le j_2,\dots,i_l\le j_l$. Note that $R=(r_1;r_2;\dots;r_n
)$, $r_p=1 $ if $p=i_a$ $(a=1,\dots,t)$; $S=(s_1;s_2;\dots;s_n ),\
s_p=1 $ if $ p=j_b $ $(b=1,\dots,l)$.

For $1\le k \le i_1-1$, $R_k=S_k=0$, so $R_k\ge S_k$; for $i_1\le k
\le i_2-1\ (\le j_2-1)$, $R_k=1,S_k\le 1$, so $R_k\ge S_k$; \dots;
for $i_{l-1}\le k \le i_l-1 \ (\le j_l-1)$, $R_k=l-1,S_k\le l-1$, so
$R_k\ge S_k$; for $i_l\le k \le n,\ R_k\ge l,S_k\le l$, so $R_k\ge
S_k$. Therefore, $R_p\ge S_p ,1\le p \le n$.

Suppose that $R_p\ge S_p,\ 1\le p \le n$. Then $t\ge l$. Since
$R_{i_1}=1$, $R_{j_1}\ge S_{j_1}=1$, we have $i_1\le j_1$.
Similarly, $i_2\le j_2,\dots,i_l\le j_l$. Therefore $R\rhd S$.
\end{proof}

\begin{corollary}\label{youngtableau}
For any $R,S,T\in V,\ w=RST$ is a Young tableau if and only if
$R_p\ge S_p\ge T_p,\ 1\le p \le n$.
\end{corollary}

\begin{definition}\label{defc1}
(Robinson-Schensted column algorithm)  Let $R\in V$, $x\in$ A.
$$
x\cdot R = \left\{
\begin{array}{cc}
    xR,& \mbox{ if }\ xR \mbox{ is a column}; \\
    R'\cdot y,& \mbox{otherwise.}
\end{array}
\right.
$$
where $y$ is the rightmost letter in $R$ and is  larger than or
equal to $x$, and $R'=R\mid _{y\rightarrow x}$, i.e. $R'$ is
obtained from $R$ by replacing $y$ by $x$.
\end{definition}

\begin{lemma}\label{exist}
For any $R,S\in V$, by  Robinson-Schensted column algorithm, there exist
$R',S'\in V$ such that $R\cdot S=R'\cdot S'$ and $R'\cdot S'$ is a
Young tableau, where $S'$ is empty if $R\cdot S=R'$ is a column.
\end{lemma}
\begin{proof} Suppose $R=i_ti_{t-1}\dots i_2i_1,\ S=j_lj_{l-1}\dots j_2j_1$.

If $i_1>j_l$, we have $RS$ is a column which is a one-column Young
tableau.

If $i_1\le j_l$, then there uniquely exists a $y_1$ and $y_1$ is the
rightmost letter in $S$, such that $y_1\ge i_1$. By  Robinson-Schensted
column algorithm, we have $i_1\cdot S=S\mid _{y_1\rightarrow
i_1}\cdot y_1$, and $S\mid _{y_1\rightarrow i_1}\cdot y_1$ has two
columns. Since the minimal number in $S\mid _{y_1\rightarrow i_1}$
is $\le i_1$ and $i_1\le y_1$, $S\mid _{y_1\rightarrow i_1}\cdot
y_1$ is also a two-column Young tableau.

For $i_2\cdot S\mid _{y_1\rightarrow i_1}\cdot y_1$, there are two
cases to consider.

Case 1. $i_2S\mid _{y_1\rightarrow i_1}$ is a column. Then $i_2\cdot
S\mid _{y_1\rightarrow i_1}\cdot y_1=i_2S\mid _{y_1\rightarrow
i_1}\cdot y_1$ and $i_2S\mid _{y_1\rightarrow i_1}\cdot y_1$ is a
two-column Young tableau.

Case 2. There exists a $y_2$ in $S$ such that  $y_2$ is the
rightmost letter in $S\mid _{y_1\rightarrow i_1}$ with $y_2\ge i_2$.
Since $y_2\ge i_2>i_1$ and $y_1$ is the rightmost letter in $S$ that
is $\ge i_1$, we have $y_2>y_1$. Then $i_2\cdot S\mid
_{y_1\rightarrow i_1}\cdot y_1=S\mid _{y_2\rightarrow
i_2,y_1\rightarrow i_1}\cdot y_2y_1$. Since the second minimal
number in $S\mid _{y_2\rightarrow i_2,y_1\rightarrow i_1}$ (also in
$S$) is $\le i_2\ (\le y_2)$, $S\mid _{y_2\rightarrow
i_2,y_1\rightarrow i_1}\cdot y_2y_1$ is also a two-column Young
tableau.

Continuing in this way, we will find two columns $R',S'\in V$ such
that $R\cdot S=R'\cdot S'$ and $R'\cdot S'$ is a Young tableau,
where $S'$ is empty if $R\cdot S=R'$ is a column.
\end{proof}

Denote
$$
\Lambda=\{R\cdot S=R'\cdot S',\ R,S\in V\}.
$$
By noting that in $sgp\langle A\mid \Omega\rangle,\ R\cdot S=R'\cdot
S'$,  it follows that $ sgp\langle V\mid \Lambda\rangle\cong
sgp\langle A\mid \Omega\rangle $ and so we may assume that $F\langle
V\mid \Lambda\rangle=F\langle A\mid \Omega\rangle$.

It is clear that $\Lambda$ is a finite set.

The following Theorem \ref{maintheorem2} is the second main result
of the paper.

\begin{theorem}\label{maintheorem2}
With the deg-lex ordering on $V^*$, $\Lambda$ is a finite
Gr\"{o}bner-Shirshov basis for the plactic algebra $F\langle V\mid
\Lambda\rangle$. The set of Young tableaux on $A$ is a normal form
of the plactic monoid $sgp\langle A\mid \Omega\rangle$.
\end{theorem}

{\bf Remark}: For an arbitrary well-ordered set $A$, we
define similarly  a column on $A$, Robinson-Schensted column algorithm
and the set  $\Lambda$. Then for an arbitrary well-ordered set $A$,
similar to the proof of $A$ to be finite, we also have Theorem
\ref{maintheorem2}. If this is the case,  $\Lambda$ may not be
finite.

\subsection{Main formula for the product of column generators}

\begin{definition} \label{defc2}
Let $W=(w_1;w_2;\dots;w_n),\ Z=(z_1;z_2;\dots;z_n)\in V$. Define an
algorithm $W\cdot Z=W'\cdot Z'$, where $W'=(w_1';w_2';\dots;w_n'),\
Z'=(z_1';z_2';\dots;z_n')$, $z_p'=min(W_p-Z'_{p-1},z_p),\
w_p'=w_p+z_p-z'_p\ ( n\geq p\ge 1)$. Then $W',Z'\in V$ or $W'\in V$
and $Z'$ is empty.
\end{definition}

\begin{lemma}\label{lemma4.5}
The algorithms in Definition \ref{defc1} and Definition \ref{defc2}
are equivalent.
\end{lemma}

The proof of this lemma is similar to that of Lemma \ref{eqv}.

\begin{corollary}
In Definition \ref{defc2}, $W'\cdot Z'$ is a Young tableau.
\end{corollary}

\subsection{ Expressions of reductions}

In order to prove the Theorem \ref{maintheorem2}, we have to check
that all possible compositions in $\Lambda$, which are only
intersections, are trivial.

Let $R=(r_1;r_2;\dots;r_n),\ S=(s_1;s_2;\dots;s_n),\
T=(t_1;t_2;\dots;t_n)\in V$.

For reductions, we will use the following notation.

\begin{eqnarray*}
TSR&=&\left( \begin{array}{ccc}
               t_1 & s_1 & r_1  \\
               t_2 & s_2 & r_2  \\
               \vdots & \vdots & \vdots  \\
               t_n & s_n & r_n
               \end{array} \right)
        \overset{\text{$T\cdot S=B\cdot A$}}{\Longrightarrow}\left( \begin{array}{ccc}
               b_1 & a_1 & r_1  \\
               b_2 & a_2 & r_2  \\
               \vdots & \vdots & \vdots  \\
               b_n & a_n & r_n
               \end{array} \right)\\
    &\ \overset{\text{$A\cdot R=D\cdot C$}}{\Longrightarrow}&\left( \begin{array}{ccc}
               b_1 & d_1 & c_1  \\
               b_2 & d_2 & c_2  \\
               \vdots & \vdots & \vdots  \\
               b_n & d_n & c_n
               \end{array} \right)
    \overset{\text{$B\cdot D=F\cdot E$}}{\Longrightarrow}\left( \begin{array}{ccc}
               f_1 & e_1 & c_1  \\
               f_2 & e_2 & c_2  \\
               \vdots & \vdots & \vdots  \\
               f_n & e_n & c_n
               \end{array} \right),
\end{eqnarray*}

\begin{eqnarray*}
 TSR&=&\left(\begin{array}{ccc}
               t_1 & s_1 & r_1  \\
               t_2 & s_2 & r_2  \\
               \vdots & \vdots & \vdots  \\
               t_n & s_n & r_n
               \end{array} \right)
        \overset{\text{$S\cdot R=H\cdot G$}}{\Longrightarrow}\left( \begin{array}{ccc}
               t_1 & h_1 & g_1  \\
               t_2 & h_2 & g_2  \\
               \vdots & \vdots & \vdots  \\
               t_n & h_n & g_n
               \end{array} \right)\\
    &\ \overset{\text{$T\cdot H=J\cdot I$}}{\Longrightarrow}&\left( \begin{array}{ccc}
               j_1 & i_1 & g_1  \\
               j_2 & i_2 & g_2  \\
               \vdots & \vdots & \vdots  \\
               j_n & i_n & g_n
               \end{array} \right)
    \overset{\text{$I\cdot G=L\cdot K$}}{\Longrightarrow}\left( \begin{array}{ccc}
               j_1 & l_1 & k_1  \\
               j_2 & l_2 & k_2  \\
               \vdots & \vdots & \vdots  \\
               j_n & l_n & k_n
               \end{array} \right),
\end{eqnarray*}
where
\begin{eqnarray*}
&&A=(a_1;a_2;\dots;a_n),\ \ B=(b_1;b_2;\dots;b_n),\ \
C=(c_1;c_2;\dots;c_n),\\
&&D=(d_1;d_2;\dots;d_n),\ \ E=(e_1;e_2;\dots;e_n),\ \
F=(f_1;f_2;\dots;f_n),\\
&&G=(g_1;g_2;\dots;g_n),\ \ H=(h_1;h_2;\dots;h_n),\ \
I=(i_1;i_2;\dots;i_n),\\
&&J=(j_1;j_2;\dots;j_n),\ \ K=(k_1;k_2;\dots;k_n),\ \
L=(l_1;l_2;\dots;l_n),
\end{eqnarray*}
and  $T\cdot S=B\cdot A,A\cdot R=D\cdot C,B\cdot D=F\cdot E,
S\cdot R=H\cdot G, T\cdot H=J\cdot I,I\cdot G=L\cdot K\in \Lambda$.

We will prove that
\begin{equation*}
\left( \begin{array}{ccc}
               f_1 & e_1 & c_1  \\
               f_2 & e_2 & c_2  \\
               \vdots & \vdots & \vdots  \\
               f_n & e_n & c_n
               \end{array} \right)
=\left( \begin{array}{ccc}
               j_1 & l_1 & k_1  \\
               j_2 & l_2 & k_2  \\
               \vdots & \vdots & \vdots  \\
               j_n & l_n & k_n
               \end{array}\right),
\end{equation*}
which implies that the intersection composition
$(TS,SR)_{TSR}\equiv 0, mod(TSR, \Lambda)$. Therefore, $\Lambda$ is
a Gr\"{o}bner-Shirshov basis of the algebra $F\langle V\mid
\Lambda\rangle$.

\subsection{$c_m=k_m\ (1\leq m\leq n)$}

We need the following lemmas to prove $c_m=k_m$.

\begin{lemma}\label{lem1dh}
For all $p\ (1\leq p\le n)$, $I_p\ge A_p$.
\end{lemma}
\begin{proof}
Note that $i_1=min(t_1,h_1)=min(t_1,s_1+r_1-g_1)$ and
$a_1=min(t_1,s_1)$. If $s_1\ge r_1$, then $g_1=r_1$ and $i_1=min(t_1,s_1)=a_1$. If
$s_1<r_1$, then $g_1=s_1$ and $i_1=min(t_1,r_1)\ge min(t_1,s_1)=a_1$. This shows that
$I_1=i_1\ge a_1=A_1$.

Assume that for all $m\le p-1\ (p\ge 2)$, $I_m\ge A_m$. Since
$I_p=I_{p-1}+min(T_p-I_{p-1},h_p)=min(T_p,I_{p-1}+h_p),\ A_p=min(T_p,A_{p-1}+s_p)$ and $
h_p=s_p+r_p-g_p
   =s_p+r_p-min(S_p-G_{p-1},r_p)\ge s_p$,
we have $I_p\ge A_p$. Now by induction, the result follows.
\end{proof}

\begin{lemma}\label{lem2jd}
Assume that $p\geq 2$, $ K_{p-1}=C_{p-1}$, $K_p=C_p=A_p<C_{p-1}+r_p$
and $S_p-G_{p-1}\ge r_p$. Then $C_p=I_p$.
\end{lemma}
\begin{proof}
Note that $ C_p=min(A_p,C_{p-1}+r_p)$ and $
K_p=min(I_p,K_{p-1}+g_p)$. If \ $S_p-G_{p-1}\ge r_p$, then
$g_p=min(S_p-G_{p-1},r_p)=r_p.$

Therefore $K_p=C_p=A_p<K_{p-1}+g_p$, which concludes $C_p=K_p=I_p$.
\end{proof}

\begin{lemma}\label{lem3sb}
Assume that  $p\ge 2$, $K_{p-1}=C_{p-1},\ K_p=C_p$ and
$C_p=C_{p-1}+r_p$. Then $S_p-G_{p-1}\ge r_p$.
\end{lemma}
\begin{proof}
Note that
$K_p=min(I_p,K_{p-1}+g_p)=min(I_p,K_{p-1}+S_p-G_{p-1},K_{p-1}+r_p)$.
Since $ K_{p-1}=C_{p-1}, K_p=C_p$ and $C_p=C_{p-1}+r_p$,
we have
$ K_p=C_p=C_{p-1}+r_p=K_{p-1}+r_p$.

Therefore, $K_{p-1}+S_p-G_{p-1}\ge K_{p-1}+r_p$, which concludes
$S_p-G_{p-1}\ge r_p$.
\end{proof}

\begin{lemma}\label{lem6}
For any $p\ (1\leq p\le n)$, $S_p+C_p-G_p-A_p\ge 0$.
\end{lemma}
\begin{proof} Induction on $p$.

 If $c_1=a_1$, then $s_1+c_1-g_1-a_1=s_1-g_1\ge 0$.
 If $c_1<a_1$, then $c_1=r_1=g_1=0,\ a_1=t_1=s_1=1$. We have
 $s_1+c_1-g_1-a_1=0$. This shows that the result holds for $p=1$.

Assume that for all $1 \leq m\le p-1$, $S_m+C_m-G_m-A_m\ge 0$.

If
$C_p=A_p\le C_{p-1}+r_p$, then $S_p+C_p-G_p-A_p=S_p-G_p\ge 0$.

If
$C_p=C_{p-1}+r_p<A_p$, by Lemma \ref{lem3sb}, $g_p=r_p,h_p=s_p$. So,
\begin{eqnarray*}
S_p+C_p-G_p-A_p&=&S_p+C_{p-1}+r_p-(G_{p-1}+r_p)-A_p\\
&=&S_p+C_{p-1}-G_{p-1}-A_p\\
              &=&S_{p-1}+C_{p-1}-G_{p-1}-A_{p-1}+s_p-a_p\\
             & \ge& s_p-a_p \ge 0.
\end{eqnarray*}
Therefore, $S_p+C_p-G_p-A_p\ge 0$.
\end{proof}

\ \

By noting that $s_p\geq a_p$ and by Lemma \ref{lem6}, we have the following lemma.
\begin{lemma}\label{lem4hjsb}
For all $p\ (1\leq p\le n)$, we have $A_p-C_{p-1}\le S_p-G_{p-1}$.
\end{lemma}

\begin{lemma}\label{lem5as}
If $C_{p-1}<A_{p-1}$, then $h_p=s_p$.

\end{lemma}
\begin{proof}
Note that $h_p=s_p+r_p-g_p=s_p+r_p-min(S_{p-1}-C_{p-1},r_p)\geq s_p$.
If $h_p>s_p$, then $h_p=1$, $s_p=0$,  $r_p=1$ and $S_p-G_{p-1}=0$. By Lemma
\ref{lem4hjsb}, $A_p-C_{p-1}\leq S_p-G_{p-1}=0$. But $A_p-C_{p-1}\geq A_{p-1}-C_{p-1}>0$ which is a contradiction. Therefore,
$h_p=s_p$.
\end{proof}

\subsubsection{$c_p=k_p\ (1\leq p\leq n)$}

It suffices to prove $C_p=K_p\ (1\leq p\leq n)$.
We prove the result by induction on $p$.

Since $k_1=min(i_1,g_1)=min(t_1,h_1,g_1)=min(t_1,s_1,r_1)$ and
$c_1=min(a_1,r_1)=min(t_1,s_1,r_1)$, we have $K_1=C_1$.

Assume that for all $1\leq p\le m-1$, $K_p=C_p$. Note that
\begin{eqnarray*}
C_m&=&min(T_m,A_{m-1}+s_m,C_{m-1}+r_m),\\
K_m&=&min(T_m,I_{m-1}+h_m,K_{m-1}+M,K_{m-1}+r_m),
\end{eqnarray*}
where $M=S_{m}-G_{m-1}=S_{m-1}-G_{m-2}+s_m-g_{m-1}$.

Observe the following equations
\begin{eqnarray*}
&&C_{m-t}=min(A_{m-t},C_{m-t-1}+r_{m-t}),\ \ \ 1\leq t\leq m-2
\end{eqnarray*}
and denote $q$ the cardinality of the following set
$$
\{i|C_{m-j}=C_{m-j-1}+r_{m-j},\ 1\leq j\leq i\leq m-2\}.
$$

There are three cases to  consider.

Case 1. $q=0$, i.e.  $C_{m-1}=A_{m-1}<C_{m-2}+r_{m-1}$.

If $S_{m-1}-G_{m-2}<r_{m-1}$, then
$S_{m-1}-G_{m-2}=g_{m-1}$ and
$K_{m-1}+S_{m-1}-G_{m-2}+s_m-g_{m-1}=K_{m-1}+s_m$. Since $I_{m-1}\ge
K_{m-1}$ and $h_m\ge s_m $, $I_{m-1}+h_m\ge K_{m-1}+s_m$.
Therefore, $K_m=min(T_m,K_{m-1}+s_m,K_{m-1}+r_m)=min(T_m,C_{m-1}+s_m,C_{m-1}+r_m)=min(T_m,A_{m-1}+s_m,C_{m-1}+r_m)=C_m$.

If $S_{m-1}-G_{m-2}\ge r_{m-1}$, then $g_{m-1}=r_{m-1}$. So by Lemma
\ref{lem2jd}, $C_{m-1}=I_{m-1}$,  $S_{m-1}-G_{m-2}+s_m-g_{m-1}\ge
s_m$. If $h_m=s_m$, then $K_m=min(T_m,C_{m-1}+s_m,C_{m-1}+r_m)=C_m$.
If $h_m>s_m$, then $s_m=0,\ h_m=1$. Thus $r_m=1,\ g_m=0$.
This implies $K_m=K_{m-1}=C_{m-1}=C_m$.

Case 2. $1\leq q\leq m-3$, i.e. for any $1\leq k\leq q$, $C_{m-k}=C_{m-k-1}+r_{m-k}$,
$A_{m-k}\ge C_{m-k-1}+r_{m-k}$,
 $C_{m-1}=A_{m-q-1}+\sum_{i=1}^{q}r_{m-i}$ and $A_{m-q-1}< C_{m-q-2}+r_{m-q-1}$.
By Lemma \ref{lem3sb}, we have
$S_{m-k}-G_{m-k-1}\ge r_{m-k}$, $g_{m-k}=r_{m-k}$ and
$h_{m-k}=s_{m-k}$.

We consider the value of $h_m$ first.
Note that $C_{m-2}+r_{m-1}\leq A_{m-1}$.

(i) If $C_{m-2}+r_{m-1}=A_{m-1}$, then $C_{m-1}=A_{m-1}$.

If $S_m-G_{m-1}<r_m$, then $S_m-G_{m-1}=0,\ r_m=1$, $g_m=s_m=0,\
h_m=1$.
Since $I_{m-1}\ge A_{m-1}=K_{m-1}$, we have
\begin{eqnarray*}
C_m&=&min(T_m,C_{m-1},C_{m-1}+r_m)=min(T_m,C_{m-1})\\
K_m&=&min(T_m,I_{m-1}+h_m,K_{m-1},K_{m-1}+r_m)=min(T_m,K_{m-1})
\end{eqnarray*}
Therefore, $C_m=K_m$.

If $S_m-G_{m-1}\ge r_m$, then $g_m=r_m$ and $h_m=s_m$.

(ii) If $C_{m-2}+r_{m-1}<A_{m-1}$, then by Lemma \ref{lem5as}, we have
$h_m=s_m$.

In both cases, we have either $C_m=K_m$ or $h_m=s_m$,
so we may assume that $h_m=s_m$.

Now,
\begin{eqnarray*}
C_m&=&min(T_m,T_{m-1}+s_m,\dots,T_{m-q}+\sum_{i=0}^{q-1}s_{m-i},A_{m-q-1}+\sum_{i=0}^{q}s_{m-i},C_{m-1}+r_m),\\
K_m&=&min(T_m,T_{m-1}+h_m,\dots,T_{m-q}+\sum_{i=0}^{q-1}h_{m-i},I_{m-q-1}+\sum_{i=0}^{q}h_{m-i},\\
&&\ \ \ \ \ \ \ K_{m-q-1}+\sum_{i=1}^{q}r_{m-i}+M,K_{m-1}+r_m)\\
&=&min(T_m,T_{m-1}+s_m,\dots,T_{m-q}+\sum_{i=0}^{q-1}s_{m-i},I_{m-q-1}+\sum_{i=0}^{q}s_{m-i},\\
&&\ \ \ \ \ \ \ K_{m-q-1}+\sum_{i=1}^{q}r_{m-i}+M,K_{m-1}+r_m),
\end{eqnarray*}
where
$M=S_m-G_{m-1}=\sum_{i=0}^{q}s_{m-i}+S_{m-q-1}-G_{m-q-2}-\sum_{i=1}^{q}r_{m-i}-g_{m-q-1}$.

If $S_{m-q-1}-G_{m-q-2}<r_{m-q-1}$, then $C_{m-q-1}=A_{m-q-1}$,
$g_{m-q-1}=S_{m-q-1}-G_{m-q-2}$,
$M=\sum_{i=0}^{q}s_{m-i}-\sum_{i=1}^{q}r_{m-i}$,
$K_{m-q-1}+\sum_{i=1}^{q}r_{m-i}+M=K_{m-q-1}+\sum_{i=0}^{q}s_{m-i}$
and  $I_{m-q-1}+\sum_{i=0}^{q}s_{m-i}\ge
K_{m-q-1}+\sum_{i=0}^{q}s_{m-i}$. Therefore,
\begin{eqnarray*}
K_m&=&min(T_m,T_{m-1}+s_m,\dots,T_{m-q}+\sum_{i=0}^{q-1}s_{m-i},A_{m-q-1}+\sum_{i=0}^{q}s_{m-i},C_{m-1}+r_m)\\
   &=&C_m.
\end{eqnarray*}

If $S_{m-q-1}-G_{m-q-2}\ge r_{m-q-1}$, then $g_{m-q-1}=r_{m-q-1}$.
By Lemma \ref{lem2jd} and $C_{m-q-1}=A_{m-q-1}$, we have $C_{m-q-1}=I_{m-q-1},\ M\ge
\sum_{i=0}^{q}s_{m-i}-\sum_{i=1}^{q}r_{m-i},\
K_{m-q-1}+\sum_{i=1}^{q}r_{m-i}+M\ge
K_{m-q-1}+\sum_{i=0}^{q}s_{m-i}$ and
$A_{m-q-1}+\sum_{i=0}^{q}s_{m-i}=K_{m-q-1}+\sum_{i=0}^{q}s_{m-i}$.
Therefore,
\begin{eqnarray*}
K_m&=&min(T_m,T_{m-1}+s_m,\dots,T_{m-q}+\sum_{i=0}^{q-1}s_{m-i},K_{m-q-1}+\sum_{i=0}^{q}s_{m-i},C_{m-1}+r_m)\\
   &=&C_m .
\end{eqnarray*}

Case 3. $q=m-2$, i.e.  for all $1\leq k\le q=m-2$, $C_{m-k}=C_{m-k-1}+r_{m-k}$,  $C_{m-k-1}+r_{m-k}\le A_{m-k}$ and
$C_{m-1}=C_1+\sum_{i=1}^{m-2}r_{m-i}$.
By Lemma \ref{lem3sb}, $S_{m-k}-G_{m-k-1}\ge r_{m-k}, \
g_{m-k}=r_{m-k}, \ h_{m-k}=s_{m-k}$. So,
\begin{eqnarray*}
C_m&=&min(T_m,T_{m-1}+s_m,\dots,T_1+\sum_{i=0}^{m-2}s_{m-i},\sum_{i=0}^{m-1}s_{m-i},C_{m-1}+r_m)\\
   &=&min(T_m,T_{m-1}+s_m,\dots,T_1+\sum_{i=0}^{m-2}s_{m-i},S_m,C_{m-1}+r_m)
\end{eqnarray*}
and
\begin{eqnarray*}
K_m&=&min(T_m,T_{m-1}+s_m,\dots,T_1+\sum_{i=0}^{m-2}s_{m-i},\sum_{i=0}^{m-2}s_{m-i}+h_1,k_1-g_1+\sum_{i=0}^{m-1}s_{m-i},C_{m-1}+r_m)\\
   &=&min(T_m,T_{m-1}+s_m,\dots,T_1+\sum_{i=0}^{m-2}s_{m-i},S_m+h_1-s_1,S_m+k_1-g_1,C_{m-1}+r_m).
\end{eqnarray*}
If $g_1=k_1$, then $S_m+k_1-g_1=S_m \le S_m+h_1-s_1 $ and
\begin{eqnarray*}
K_m=min(T_m,T_{m-1}+s_m,\dots,T_1+\sum_{i=0}^{m-2}s_{m-i},S_m,C_{m-1}+r_m)=C_m.
\end{eqnarray*}
If $g_1>k_1$, we have $g_1=1,\ k_1=0$, $s_1=r_1=1,\ t_1=0$,
$h_1=s_1=1$ and $T_1+\sum_{i=0}^{m-2}s_{m-i}=S_m+k_1-g_1=S_m-s_1\le
S_m $. So,
\begin{eqnarray*}
K_m=min(T_m,T_{m-1}+s_m,\dots,T_1+\sum_{i=0}^{m-2}s_{m-i},C_{m-1}+r_m)=C_m.
\end{eqnarray*}

Therefore, the proof of $K_m=C_m$ is completed.

\subsection{$l_m=e_m\ (1\leq m\leq n)$}

We need the following lemma.

\begin{lemma}\label{lem7}
If $S_p+C_p-G_p-A_p> 0$, then $I_p=A_p$.
\end{lemma}
\begin{proof} Induction on $p$.

For $p=1$, by the proof of Lemma \ref{lem6}, we have
$c_1=a_1,s_1>g_1$, so $s_1=h_1=1$, $g_1=r_1=c_1=a_1=0$. Since
$a_1=min(t_1,s_1)=0$ and $s_1=1$, we have $t_1=0$. Then
$i_1=min(t_1,h_1)=0=a_1$.

Assume that for all $1\leq m\le p-1$, $I_m=A_m$ if $S_m+C_m-G_m-A_m> 0$.
Suppose that $S_p+C_p-G_p-A_p> 0$.

If $A_p<C_{p-1}+r_p$, then
$C_p=A_p$, $S_p-G_p>0$, i.e. $G_p=G_{p-1}+g_p<S_p$,
$g_p=r_p<S_p-G_{p-1}$, and so by Lemma \ref{lem2jd}, $C_p=I_p=A_p$.

If $A_p\ge C_{p-1}+r_p $, then $C_p=C_{p-1}+r_p$, and by Lemma
\ref{lem3sb} $g_p=r_p,\ h_p=s_p$. By Lemma \ref{lem6},
$S_{p-1}+C_{p-1}-G_{p-1}-A_{p-1}\ge 0$.  If
$S_{p-1}+C_{p-1}-G_{p-1}-A_{p-1}>0$, by induction, $I_{p-1}=A_{p-1}$
and $ I_p=min(T_p,I_{p-1}+h_p)=min(T_p,A_{p-1}+s_p)=A_p$. If
$S_{p-1}+C_{p-1}-G_{p-1}-A_{p-1}=0$, then $s_p>a_p$, i.e. $s_p=1,\
a_p=0$. Since $a_p=min(T_p-A_{p-1},s_p)$, we have $T_p-A_{p-1}=0$.
Since $a_p=0$, $T_p=A_p$. Since $A_p\le I_p$ and $I_p\le T_p$, we
have $A_p=I_p$.
\end{proof}

\subsubsection{$l_m=e_m,\ 1\leq m\leq n$}

\begin{proof} Induction on $m$.
\begin{eqnarray*}
l_1&=&i_1+g_1-k_1=min(s_1,r_1)+min(t_1,h_1)-k_1 \\
   &=&min(s_1+t_1,s_1+h_1,r_1+t_1,r_1+h_1)-k_1 \\
   &=&min(s_1+t_1,s_1+r_1,r_1+t_1)-k_1, \\
e_1&=&min(b_1,d_1)=min(s_1+t_1-a_1,a_1+r_1-c_1)\\
   &=&min(s_1+t_1+c_1-a_1,a_1+r_1)-c_1 \\
   &=&min(s_1+t_1+c_1-a_1,t_1+r_1,s_1+r_1)-c_1 .
\end{eqnarray*}

If $c_1=a_1$, then $e_1=min(s_1+t_1,s_1+r_1,r_1+t_1)-k_1=l_1$.

If $c_1<a_1$, we have $c_1=0$, $a_1=1$, $s_1=t_1=1,\ r_1=0$, and so
$e_1=l_1=1$.

Therefore $l_1=e_1$.

Assume that for any $1\leq p\le m-1$, $l_p=e_p$.

Since $L_{m-1}=E_{m-1}=I_{m-1}+G_{m-1}-K_{m-1}$ and
$K_{m-1}=C_{m-1}=C_m-c_m$, we have
\begin{eqnarray*}
e_m&=&min(B_m-E_{m-1},d_m)\\
   &=&min(T_m+S_m-A_m-E_{m-1},a_m+r_m-c_m)\\
   &=&min(T_m+S_m-A_m-E_{m-1},min(T_m-A_{m-1},s_m)+r_m-c_m)\\
   &=&min(T_m+S_m-I_{m-1}-G_{m-1}+C_m-A_m,T_m-A_{m-1}+r_m,s_m+r_m)-c_m,   \\
l_m&=&i_m+g_m-k_m\\
   &=&min(T_m-I_{m-1},h_m)+min(S_m-G_{m-1},r_m)-k_m\\
   &=&min(T_m+S_m-I_{m-1}-G_{m-1},T_m-I_{m-1}+r_m,S_m-G_{m-1}+h_m,h_m+r_m)-k_m.
\end{eqnarray*}

There are two cases to consider.

Case 1. $C_m< C_{m-1}+r_m$.
Then $C_m=A_m$.

If $S_m-G_{m-1}<r_m$, then $g_m=s_m=S_m-G_{m-1}=0$, $h_m=r_m=1$, and
by Lemma \ref{lem1dh}, $T_m-A_{m-1}\ge T_m-I_{m-1}\ge 0$. Therefore,
$l_m=min(T_m-I_{m-1},r_m)-k_m=e_m$.

If $S_m-G_m\geq r_m$, then by noting that $C_m<C_{m-1}+r_m$, we have
$c_m=0,\ r_m=g_m=1$ and thus
\begin{eqnarray*}
e_m&=&min(T_m+S_m-I_{m-1}-G_{m-1}, a_m+1)-c_m=T_m+S_m-I_{m-1}-G_{m-1},\\
l_m&=&min (T_m+S_m-I_{m-1}-G_{m-1},T_m-I_{m-1}+1, h_m+1 )-c_m.
\end{eqnarray*}
Since $T_m-I_{m-1}\geq i_m\geq 0$, we have
$l_m=T_m+S_m-I_{m-1}-G_{m-1}=e_m.$

Case 2. $C_m=C_{m-1}+r_m$.

By Lemma \ref{lem3sb}, $g_m=r_m,\ h_m=s_m$. Then
$T_m+S_m-I_{m-1}-G_{m-1}\ge T_m-I_{m-1}+r_m$,
$T_m+S_m-I_{m-1}-G_{m-1}+C_m-A_m=T_m-I_{m-1}+S_{m-1}-G_{m-1}+C_{m-1}-A_{m-1}+s_m+r_m-a_m$ and
$T_m-A_{m-1}+r_m\ge T_m-I_{m-1}+r_m$. By Lemma \ref{lem6},
$S_{m-1}+C_{m-1}-G_{m-1}-A_{m-1}\ge 0$.

If $S_{m-1}+C_{m-1}-G_{m-1}-A_{m-1}>0$, by Lemma \ref{lem7},
$I_{m-1}=A_{m-1}$. Since $s_m-a_m\ge 0$,
$T_m+S_m-I_{m-1}-G_{m-1}+C_m-A_m>T_m-I_{m-1}+r_m$. Therefore,
$l_m=min(T_m-I_{m-1}+r_m,s_m+r_m)-k_m=e_m$.

If $S_{m-1}+C_{m-1}-G_{m-1}-A_{m-1}=0$, then there are two subcases
to consider.

If $s_m=a_m$, $T_m+S_m-I_{m-1}-G_{m-1}+C_m-A_m=T_m-I_{m-1}+r_m$.
Therefore $l_m=min(T_m-I_{m-1}+r_m,s_m+r_m)-k_m=e_m$.

If $s_m>a_m$, then $a_m=i_m=0$, $s_m=1$, $s_m+c_m-g_m-a_m=s_m=1$,
$S_m+C_m-G_m-A_m>0$. Now by Lemma \ref{lem7}, $I_m=A_m$,
$I_{m-1}=A_{m-1}$.
Therefore, $
l_m=min(T_m-I_{m-1},s_m)+r_m-k_m=r_m-k_m=e_m. $
\end{proof}

\begin{lemma}\label{young2}
$FEC=JLK$ is a Young tableau.
\end{lemma}
\begin{proof}
Since $E_p\ge C_p$, $L_p\ge K_p$, $C_p=K_p$ and $E_p=L_p$, we
have $F_p\ge E_p\ge C_p$. By Corollary \ref{youngtableau}, $JLK$ is a
three-column Young tableau.
\end{proof}


\begin{thebibliography}{9}



\bibitem{AL}William W. Adams and Philippe Loustaunau, An introduction to Gr\"{o}bner
bases, Graduate Studies in Mathematics, Vol. 3, American
Mathematical Society (AMS), 1994


\bibitem{Be78}G.M. Bergman, The diamond lemma for ring theory, {\it Adv.
Math.}, {\bf 29}, 178-218 (1978)


\bibitem{Bo72} L.A. Bokut, Insolvability of the word problem for Lie algebras, and
subalgebras of finitely presented Lie algebras.  Izvestija AN USSR
(mathem.)   {\bf36},  1173-1219  (1972)


\bibitem{Bo76}L.A. Bokut, Imbeddings into simple associative
algebras, {\it Algebra i Logika}, {\bf 15}, 117-142 (1976)



\bibitem{BC}L.A. Bokut and Yuqun Chen, Gr\"{o}bner-Shirshov
 bases: Some new results, Proceedings of the Second International Congress
 in Algebra and Combinatorics, World Scientific,  35-56 (2008)

\bibitem{BC13}L.A. Bokut and Yuqun Chen, Gr\"obner-Shirshov bases and their calculation,
Bull. Math. Sci., DOI 10.1007/s13373-014-0054-6


\bibitem{BCC08}L.A. Bokut, Yuqun Chen and Yongshan Chen, Composition-Diamond lemma
for tensor product of free algebras, \emph{Journal of Algebra},
\textbf{323}, 2520-2537 (2010)

\bibitem{BCC11}L.A. Bokut, Yuqun Chen and Yongshan Chen,  Groebner-Shirshov bases for Lie
algebras over a commutative algebra,  \emph{Journal of Algebra},
\textbf{337}, 82-102 (2011)


\bibitem{BCD08}L.A. Bokut,  Yuqun Chen and Xueming Deng,
Gr\"{o}bner-Shirshov bases for Rota-Baxter algebras, \emph{Siberian
Math. J.}, \textbf{51}(6), 978-988 (2010)

\bibitem{BCLi13}L.A. Bokut,  Yuqun Chen and Yu Li,
Lyndon-Shirshov basis and anti-commutative algebras, \emph{Journal
of Algebra}, {\bf378}, 173-183 (2013)


\bibitem{BCL08}L.A. Bokut,  Yuqun Chen and Cihua Liu,
Gr\"{o}bner-Shirshov bases for dialgebras,\emph{ International
Journal of Algebra and Computation}, \textbf{20}(3), 391-415 (2010)

\bibitem{BCM} L.A. Bokut, Yuqun Chen and Qiuhui Mo,
Gr\"{o}bner-Shirshov bases and embeddings of algebras,
\emph{International Journal of Algebra and Computation},
\textbf{20}, 875-900 (2010)

\bibitem{BCM13} L.A. Bokut, Yuqun Chen and Qiuhui Mo, Gr\"{o}bner-Shirshov bases for
semirings, \emph{Journal of Algebra}, {\bf385}, 47-63 (2013)


\bibitem{BCS} L.A. Bokut, Yuqun Chen
and K.P. Shum, Some new results on Groebner-Shirshov bases, in:
Proceedings of International Conference on Algebra 2010, Advances in
Algebraic Structures,  53-102 (2012)



\bibitem{BFKK00}L.A. Bokut, Y. Fong, W.-F. Ke and P.S. Kolesnikov, Gr\"obner and
Gr\"obner-Shirshov bases in algebra and conformal algebras, {\it
Fundamental and Applied Mathematics}, {\bf 6}(3), 669-706 (2000)

\bibitem{BK03}L.A. Bokut and  P.S. Kolesnikov, Gr\"obner-Shirshov bases: from their
incipiency to the present, {\it J. Math. Sci.}, {\bf 116}(1),
2894-2916 (2003)

\bibitem{BK05}L.A. Bokut and P.S. Kolesnikov, Gr\"obner-Shirshov bases, conformal
algebras and pseudo-algebras, {\it J. Math. Sci.},  {\bf 131}(5),
5962-6003 (2005)

\bibitem{BKu94}L.A. Bokut and G. Kukin, Algorithmic and Combinatorial algebra, Kluwer Academic Publ., Dordrecht,
1994



\bibitem{Bu70}B. Buchberger, An algorithmical criteria for the
solvability of algebraic systems of equations, {\it Aequationes
Math.}, {\bf 4}, 374-383 (1970)

\bibitem{BuCL}B. Buchberger, G.E. Collins, R. Loos and R. Albrecht, Computer algebra,
symbolic and algebraic computation, Computing Supplementum, Vol.4,
New York: Springer-Verlag, 1982

\bibitem{BuW}B. Buchberger and Franz Winkler, Gr\"{o}bner bases and
applications, London Mathematical Society Lecture Note Series,
Vol.251, Cambridge: Cambridge University Press, 1998


\bibitem{Portugal} A.J. Cain, R. Gray, and A. Malheiro, Finite  Gr\"{o}bner-Shirshov bases for Plactic algebras and
biautomatic structures for Plactic monoids,  arXiv:1205.4885v2


\bibitem{CC12}Yongshan Chen and Yuqun Chen,  Groebner-Shirshov bases for matabelian Lie
algebras,  \emph{Journal of Algebra}, \textbf{358}, 143-161 (2012)




\bibitem{CLO} David A. Cox, John Little  and Donal O'Shea, Ideals,
varieties and algorithms: An introduction to computational algebraic
geometry and commutative algebra, Undergraduate Texts in
Mathematics, New York: Springer-Verlag, 1992


\bibitem{Ei} David Eisenbud, Commutative algebra with a view toward algebraic geometry,
Graduate Texts in Math., Vol.150, Berlin and New York:
Springer-Verlag, 1995



\bibitem{Hi64}H. Hironaka, Resolution of singularities of an algebraic variety
over a field if characteristic zero, I, II, {\it Ann.  Math.},
\textbf{79}, 109-203, 205-326 (1964)


\bibitem{KL}S.-J. Kang and K.-H. Lee, Gr\"obner-Shirshov bases for irreducible
$sl_{n+1}$-modules, {\it Journal of Algebra}, \textbf{232}, 1-20
(2000)


\bibitem{Knuth} D.E. Knuth, Permutations, matrices, and
generalized Young tableaux, Pacific J. Math., {\bf 34}, 709-727
(1970).


\bibitem{Okninskii}Lukasz Kubat and Jan Okni\'{n}skii, Gr\"{o}bner-Shirshov bases for
plactic algebras, arXiv:1010.3338v1


\bibitem{M.L} A. Lascoux, B. Leclerc and J.Y. Thibon, The plactic monoid, in:
\textit{Algebraic Combinatorics on Words,} Cambridge Univ. Press,
2002

%
\bibitem{M.L.M.-P} A. Lascoux and M.-P. Sch$\ddot{u}$tzenberger, Le monoide plaxique,
in De Luca, A. (ED.), Non-Commutative Structures in Algebra and
Geometric Combinatorics, Vol. 109 of Quaderni de ``La Ricerca
Scientifica", pp. 129-156. Consiglio Nazionale delle Ricerche, 1981

\bibitem{MZ}A.A. Mikhalev and A.A. Zolotykh, Standard Gr\"obner-Shirshov bases of free algebras over
rings, I. Free associative algebras,  \emph{International Journal of
Algebra and Computation}, \textbf{8}(6), 689-726 (1998)


\bibitem{Schensted}C. Schensted, Longest increasing and decreasing sub-sequences,
Canad. J. Math.,  {\bf13}, 179-191 (1961)




\bibitem{Sh62b} A.I. Shirshov,  Some algorithmic problem for Lie
algebras. Sibirsk. Mat. Zh. {\bf3}, (2) 292-296  (1962); English
translation in SIGSAM Bull.   {\bf33},   3-6 (1999)


\bibitem{Sh62a}A.I. Shirshov, Some algorithmic problem for $\varepsilon$-algebras,
 {\it Sibirsk. Mat. Z.}, \textbf{3}, 132-137 (1962)

\bibitem{Shir3} Selected works of A.I. Shirshov, Eds L.A. Bokut, V. Latyshev, I. Shestakov,
E. Zelmanov, Trs M. Bremner, M. Kochetov, Birkh\"auser, Basel,
Boston, Berlin,  2009




\end{thebibliography}
\end{document}